\newtheorem{theorem}{Theorem}[section]
\newtheorem{lemma}[theorem]{Lemma}
\newtheorem{prop}[theorem]{Proposition}
\newtheorem{coro}[theorem]{Corollary}
\theoremstyle{definition}
\newtheorem{rem}[theorem]{Remark}
\newcommand\ZZ{\mathbb Z}
\newcommand\RR{\mathbb{R}}
\newcommand\Sp{\operatorname{Sp}}
\newcommand\St{\operatorname{St}}
\newcommand\LHS{\operatorname{LHS}}
\newcommand\RHS{\operatorname{RHS}}
\DeclareMathOperator{\id}{id}
\numberwithin{equation}{section}
\title[The Steinberg group as a quotient of a braid group]{A braid-like presentation of\\ 
the integral Steinberg group of type~$C_2$}
\author{Christian Kassel}
\address{Christian Kassel: 
Institut de Recherche Math\'e\-ma\-tique Avanc\'ee,
CNRS \& Universit\'e de Strasbourg,
7 rue Ren\'{e} Descartes, 67084 Strasbourg, France}
\email{kassel@math.unistra.fr}
\urladdr{www-irma.u-strasbg.fr/\raise-2pt\hbox{\~{}}kassel/}
\keywords{Braid group, Steinberg group, symplectic modular group, group presentation}
\subjclass[2010]{(Primary)
19C09, 
20F05, 
20F36; 
(Secondary)
11E57, 
20G30, 
22E40
}
\begin{document}

\begin{abstract}
We show that the Steinberg group~$\St(C_2,\ZZ)$ associated with the Lie type~$C_2$ and with integer coefficients
can be realized as a quotient of the braid group~$B_6$ by one relation. 
As an application we give a new braid-like presentation of the symplectic modular group~$\Sp_4(\ZZ)$.
\end{abstract}

\maketitle

\begin{flushright}
\emph{In memoriam amici\\ Patrick Dehornoy\\ (1952--2019)}
\end{flushright}

\section{Introduction}\label{sec-intro}

Let $B_6$ be the braid group on six strands. 
It has a standard presentation with five generators 
$\sigma_1$, $\sigma_2$, $\sigma_3$, $\sigma_4$, $\sigma_5$, 
and the following ten relations ($1 \leq i,j \leq 5$):
\begin{equation}\label{braid1}
\sigma_i \sigma_j  = \sigma_j  \sigma_i  \qquad\;\; \text{if} \; |i-j| > 1
\end{equation}
and 
\begin{equation}\label{braid2}
\sigma_i \sigma_j \sigma_i  = \sigma_j \sigma_i \sigma_j \quad \text{if} \; |i-j| = 1 .
\end{equation}

The purpose of this note is to show that if we add the single relation 
\begin{equation}\label{rel-b6st}
(\sigma_1 \sigma_2 \sigma_1)^2 (\sigma_1 \sigma_3^{-1} \sigma_5) (\sigma_1 \sigma_2 \sigma_1)^{-2} 
(\sigma_1 \sigma_3^{-1} \sigma_5) = 1 ,
\end{equation}
to Relations\,\eqref{braid1} and\,\eqref{braid2}, we obtain a presentation of the Steinberg group $\St(C_2,\ZZ)$ 
associated with the Lie type~$C_2$ over the ring of integers (see Theorem~\ref{thm-main}).
This group was defined for general commutative rings in~\cite{StM}.

If we further add the relation 
\begin{equation}\label{rel-b6sp}
(\sigma_1 \sigma_2 \sigma_1)^4 = 1,
\end{equation}
we obtain a presentation of the symplectic modular group~$\Sp_4(\ZZ)$
(see Corollary~\ref{coro-Sp}).
It is known that the Steinberg group~$\St(C_2,\ZZ)$ is a group extension of~$\Sp_4(\ZZ)$ with infinite cyclic kernel.

In order to prove these results we construct a surjective group homomorphism
$f: B_6 \to \St(C_2,\ZZ)$, show that $f$ vanishes on the normal subgroup~$N$ of~$B_6$ generated by
the element represented by the braid word in~\eqref{rel-b6st}, 
and construct a group homomorphism $\varphi: \St(C_2,\ZZ)\to B_6/N$ such that $\varphi\circ f = \id$.
The homomorphism $f$ is a lifting of a map $B_6 \rightarrow \Sp_4(\ZZ)$,
which is a special case of the homomorphism $\overline{f} : B_{2g+2} \rightarrow \Sp_{2g}(\ZZ)$
constructed in~\cite[Sect.\,4]{Ka} for any $g\geq 1$.

The note is organized as follows. 
In Section~\ref{sec-Steinberg} we give a presentation of the Steinberg group~$\St(C_2,\ZZ)$ 
and we prove a number of relations between special elements of~$\St(C_2,\ZZ)$. 
In Section~\ref{sec-B6Sp4} we construct the homomorphism~$f$ from the braid group to
the Steinberg group.
In Section~\ref{sec-Sp4B6} we state and prove our results.
In Appendix~\ref{app-St} we give a presentation of the Steinberg group~$\St(C_2,R)$ with coefficients in
an arbitrary commutative ring~$R$.

\subsection*{Notation} All the groups we consider are noted multiplicatively.
We denote their identity elements by~$1$ and we use brackets for the commutators:
\[
[x,y] = x y x^{-1} y^{-1} \,.
\]
Recall that $[y,x]  = [x,y]^{-1}$.

\section{The Steinberg group~$\St(C_2,\ZZ)$}\label{sec-Steinberg}

The positive roots of the root system~$C_2$ consist of four vectors $\alpha$, $\beta$,
$\alpha + \beta$ and $2\alpha + \beta$ of the Euclidean plane~$\RR^2$, 
where the roots $\alpha$ and $\alpha + \beta$ are of length~$\sqrt{2}$,
the roots $\beta$ and $2\alpha + \beta$ are of length~$2$, 
and $\alpha$ is orthogonal to~$\alpha + \beta$.
Together with $-\alpha$, $-\beta$, $-(\alpha + \beta)$ and $-(2\alpha + \beta)$, 
they form the root system~$\Phi$ of type~$C_2$ (see e.g. \cite{Bo, Ca}).

\subsection{Defining relations for the Steinberg group}\label{ssec-St}

We have the following presentation for the Steinberg group~$\St(C_2,\ZZ)$ of type~$C_2$
over the ring~$\ZZ$ of integers.

\begin{prop}\label{prop-St}
The Steinberg group~$\St(C_2,\ZZ)$ has a presentation with eight generators 
\begin{equation*}
x_{\alpha}, \; x_{\beta}, \; x_{\alpha + \beta}, \; x_{2\alpha + \beta} , \; 
x_{-\alpha}, \; x_{-\beta}, \; x_{-(\alpha + \beta)}, \; x_{-(2\alpha + \beta)}
\end{equation*}
and the following 24 relations:
\begin{equation}\label{rel-x1}
[x_{\alpha}, x_{2\alpha + \beta}] = [x_{\beta}, x_{\alpha + \beta}] = [x_{\beta}, x_{2\alpha + \beta}]
= [x_{\alpha + \beta}, x_{2\alpha + \beta}]  = 1, 
\end{equation}
\begin{equation}\label{rel-x2}
[x_{\alpha}, x_{- \beta}] = [x_{\beta}, x_{-\alpha}] = [x_{\beta}, x_{-(2\alpha + \beta)}]
= [x_{-\beta}, x_{2\alpha + \beta}] = 1 , 
\end{equation}
\begin{multline}\label{rel-x3}
[x_{-\alpha}, x_{-(2\alpha + \beta)}] = [x_{-\beta}, x_{-(\alpha + \beta)}] \\
= [x_{-\beta}, x_{-(2\alpha + \beta)}] 
=  [x_{-(\alpha + \beta)}, x_{-(2\alpha + \beta)}] = 1, 
\end{multline}
\begin{equation}\label{rel-x4}
[x_{\alpha}, x_{\beta}] = x_{\alpha + \beta} \,  x_{2\alpha + \beta}  = x_{2\alpha + \beta} \, x_{\alpha + \beta} \, , 
\end{equation}
\begin{equation}\label{rel-x5}
[x_{\alpha}, x_{\alpha + \beta}] = x_{2\alpha + \beta}^2 \, , 
\end{equation}
\begin{equation}\label{rel-x6a}
[x_{\alpha}, x_{-(\alpha + \beta)}] = x_{-\beta}^{-2} \, , 
\end{equation}
\begin{equation}\label{rel-x7}
[x_{\alpha}, x_{-(2\alpha + \beta)}] = x_{- \beta}\, x_{-(\alpha + \beta)}^{-1}  = x_{-(\alpha + \beta)}^{-1} \, x_{ -\beta} \, ,
\end{equation}
\begin{equation}\label{rel-x8}
[x_{\beta}, x_{-(\alpha + \beta)}] = x_{-\alpha} \, x_{-(2\alpha + \beta)}  =  x_{-(2\alpha + \beta)}\, x_{-\alpha} \, ,
\end{equation}
\begin{equation}\label{rel-x9}
[x_{\alpha + \beta}, x_{-\alpha}] = x_{\beta}^{-2}  , 
\end{equation}
\begin{equation}\label{rel-x10}
[x_{\alpha + \beta}, x_{-\beta}] = x_{\alpha} \, x_{2\alpha + \beta}^{-1}  =  x_{2\alpha + \beta}^{-1}\, x_{\alpha} \, ,
\end{equation}
\begin{equation}\label{rel-x11}
[x_{\alpha + \beta}, x_{-(2\alpha + \beta)}] = x_{-\alpha} x_{\beta}^{-1}  =   x_{\beta}^{-1}  x_{-\alpha} \, ,
\end{equation}
\begin{equation}\label{rel-x12}
[x_{2\alpha + \beta}, x_{-\alpha}] = x_{\alpha + \beta}^{-1} \, x_{ \beta}^{-1}  = x_{ \beta}^{-1}\, x_{\alpha + \beta}^{-1} \, ,
\end{equation}
\begin{equation}\label{rel-x13}
[x_{2\alpha + \beta}, x_{-(\alpha + \beta)}] = x_{\alpha} x_{-\beta} = x_{-\beta}x_{\alpha} \, ,
\end{equation}
\begin{equation}\label{rel-x14}
[x_{-\alpha}, x_{-\beta}] = x_{-(\alpha + \beta)}^{-1} \,  x_{-(2\alpha + \beta)} 
= x_{-(2\alpha + \beta)} \, x_{-(\alpha + \beta)}^{-1}  \, ,
\end{equation}
\begin{equation}\label{rel-x15}
[x_{-\alpha}, x_{-(\alpha + \beta)}] = x_{-(2\alpha + \beta)}^{-2} \, . 
\end{equation}
\end{prop}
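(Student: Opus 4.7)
The plan is to derive this presentation from the general presentation of $\St(C_2,R)$ given in Appendix~\ref{app-St}, specialized to $R=\ZZ$. Recall that $\St(C_2,R)$ is generated by symbols $x_\gamma(t)$ for $\gamma\in\Phi$ and $t\in R$, subject to the additivity relations $x_\gamma(s)\,x_\gamma(t)=x_\gamma(s+t)$ and to Chevalley commutator formulas expressing $[x_\gamma(s),x_\delta(t)]$ as a product of factors $x_{i\gamma+j\delta}(c_{ij}^{\gamma,\delta}\,s^i t^j)$ over positive integers $i,j$ with $i\gamma+j\delta\in\Phi$, for each ordered pair of non-opposite roots $(\gamma,\delta)$; here the $c_{ij}^{\gamma,\delta}$ are fixed integer structure constants.

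Since $\ZZ$ is additively generated by $1$, the additivity relations force $x_\gamma(n)=x_\gamma(1)^n$ for every $n\in\ZZ$. Setting $x_\gamma := x_\gamma(1)$ therefore yields the eight generators of the proposition and renders the additivity relations tautological. Among the $24$ unordered non-opposite pairs of roots, the twelve for which no positive integer combination $i\gamma+j\delta$ lies in $\Phi$ give the trivial commutator relations~\eqref{rel-x1}--\eqref{rel-x3}; for the remaining twelve pairs, specializing the Chevalley formula at $s=t=1$ reproduces the relations~\eqref{rel-x4}--\eqref{rel-x15}.

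For the converse direction, one must check that the 24 listed relations imply the Chevalley formulas for arbitrary integer arguments. This follows by induction on $|s|+|t|$: for each non-trivial pair $(\gamma,\delta)$, the factors $x_{i\gamma+j\delta}$ appearing on the right-hand side commute with one another and with $x_\gamma,x_\delta$ by~\eqref{rel-x1}--\eqref{rel-x3}, so commutators of powers expand as powers of commutators and the polynomial identity in $(s,t)$ is determined by its value at $(1,1)$. The main obstacle is not conceptual but combinatorial: one must fix a consistent normalization of the structure constants $c_{ij}^{\gamma,\delta}$ and verify that each of the twelve cases in~\eqref{rel-x4}--\eqref{rel-x15} agrees with the Chevalley formula under that normalization, keeping careful track of signs (which range over $\{\pm 1,\pm 2\}$ in type~$C_2$) and of the order in which factors appear.
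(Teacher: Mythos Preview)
Your overall strategy---specializing the Stein presentation of Appendix~\ref{app-St} to $R=\ZZ$ and reducing to the generators $x_\gamma = x_\gamma(1)$---is sound in spirit, but the inductive step as stated does not go through. You assert that for each nontrivial pair $(\gamma,\delta)$ the factors $x_{i\gamma+j\delta}$ on the right-hand side commute with $x_\gamma$ and~$x_\delta$; this is false. For $(\gamma,\delta)=(\alpha,\beta)$, the factor $x_{\alpha+\beta}$ does \emph{not} commute with~$x_\alpha$: by~\eqref{rel-x5} one has $[x_\alpha,x_{\alpha+\beta}]=x_{2\alpha+\beta}^2$. Hence ``commutators of powers expand as powers of commutators'' is unavailable, and the exponent pattern is genuinely non-linear: the Chevalley formula~\eqref{rel-B4} gives $[x_\alpha(m),x_\beta(n)]=x_{\alpha+\beta}^{\,mn}\,x_{2\alpha+\beta}^{\,m^2 n}$, and the exponent $m^2 n$ is certainly not determined by the value at $(m,n)=(1,1)$ under your argument. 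The induction can be repaired, but at each step one must use the \emph{other} commutator relations (such as~\eqref{rel-x5}) to control the conjugation of the intermediate-root factors by~$x_\gamma$; carrying this out for all eight ``long'' pairs is essentially the content of Behr's reduction in~\cite{Be2} and is not a formality.

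The paper avoids this issue altogether. It invokes \cite[Sect.~3]{Be2} to get directly a presentation of $\St(C_2,\ZZ)$ on the eight generators~$x_\gamma$ with commutator relations of the required shape but with a~priori undetermined integer exponents~$c_{i,j}^{\gamma,\delta}$, and then reads off those exponents by applying the surjection $\pi$ of~\eqref{def-pi} and computing the corresponding matrix identities among the $X_\gamma$ in~$\Sp_4(\ZZ)$. This simultaneously fixes all the signs you flag as the ``main obstacle'', without any need to choose and track a Chevalley-basis normalization by hand.
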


Note that in view of Relations~\eqref{rel-x10}, \eqref{rel-x11} and~\eqref{rel-x14} 
the generators $x_{\alpha}$, $x_{-\alpha}$ and $x_{-(\alpha+ \beta)}$ 
can be expressed in terms of the five remaining generators
$x_{\beta}$, $x_{\alpha + \beta}$, $x_{2\alpha+ \beta}$, $x_{-\beta}$, $x_{-(2\alpha+ \beta)}$.

Proposition~\ref{prop-St} will be proved in Section~\ref{prop-St-pf} below.

\subsection{The surjection $\pi: \St(C_2,\ZZ) \to \Sp_4(\ZZ)$}\label{ssec-pi}

Recall that the symplectic modular group~$\Sp_4(\ZZ)$ is the group of automorphisms of
the free abelian rank~$4$ group~$\ZZ^4$ preserving the standard alternating form, 
namely the group of $4 \times 4$-matrices~$M$ with integer entries such that
\begin{equation*}
M^T 
\begin{pmatrix}
0 & I_2 \\
- I_2 & 0
\end{pmatrix}
M = 
\begin{pmatrix}
0 & I_2 \\
- I_2 & 0
\end{pmatrix} ,
\end{equation*}
where $M^T$ is the transpose of~$M$ and $I_2$ is the identity $2 \times 2$-matrix.

It is known that the group~$\Sp_4(\ZZ)$ is generated by the following eight matrices 
(see~\cite{Be1} or~\cite{HR}):
\begin{equation}\label{def-Xa}
X_{\alpha} =
\begin{pmatrix}
 1 & 1 & 0 & 0\\
 0 & 1 & 0 & 0\\
 0 & 0 & 1 & 0\\
 0 & 0 & -1 & 1
\end{pmatrix},
\quad
X_{-\alpha} =
\begin{pmatrix}
 1 & 0 & 0 & 0\\
 1 & 1 & 0 & 0\\
 0 & 0 & 1 & -1\\
 0 & 0 & 0 & 1
\end{pmatrix},
\end{equation}
\begin{equation}
X_{\beta} =
\begin{pmatrix}
 1 & 0 & 0 & 0\\
 0 & 1 & 0 & 1\\
 0 & 0 & 1 & 0\\
 0 & 0 & 0 & 1
\end{pmatrix},
\quad
X_{-\beta} =
\begin{pmatrix}
 1 & 0 & 0 & 0\\
 0 & 1 & 0 & 0\\
 0 & 0 & 1 & 0\\
 0 & 1 & 0 & 1
\end{pmatrix},
\end{equation}
\begin{equation}
X_{\alpha + \beta} =
\begin{pmatrix}
 1 & 0 & 0 & 1\\
 0 & 1 & 1 & 0\\
 0 & 0 & 1 & 0\\
 0 & 0 & 0 & 1
\end{pmatrix},
\quad
X_{-(\alpha + \beta)} =
\begin{pmatrix}
 1 & 0 & 0 & 0\\
 0 & 1 & 0 & 0\\
 0 & 1 & 1 & 0\\
 1 & 0 & 0 & 1
\end{pmatrix},
\end{equation}
\begin{equation}\label{def-X2ab}
X_{2\alpha + \beta} =
\begin{pmatrix}
 1 & 0 & 1 & 0\\
 0 & 1 & 0 & 0\\
 0 & 0 & 1 & 0\\
 0 & 0 & 0 & 1
\end{pmatrix},
\quad
X_{-(2\alpha + \beta)} =
\begin{pmatrix}
 1 & 0 & 0 & 0\\
 0 & 1 & 0 & 0\\
 1 & 0 & 1 & 0\\
 0 & 0 & 0 & 1
\end{pmatrix}.
\end{equation}
Observe that each matrix $X_{-\gamma}$ ($\gamma \in \Phi$) is the transpose of~$X_{\gamma}$.

It is easy to check that 
the matrices $X_{\gamma}$ ($\gamma \in \Phi$) satisfy the relations~\eqref{rel-x1}--\eqref{rel-x15} defining 
the Steinberg group~$\St(C_2,\ZZ)$.
Therefore there exists a unique homomorphism 
\begin{equation}\label{def-pi}
\pi: \St(C_2,\ZZ) \to \Sp_4(\ZZ)
\end{equation}
such that $\pi(x_{\gamma}) = X_{\gamma}$ for all $\gamma \in \Phi$.
Since the matrices $X_{\gamma}$ ($\gamma \in \Phi$) generate~$\Sp_4(\ZZ)$,
the homomorphism~$\pi$ is surjective.

\subsection{Proof of Proposition~\ref{prop-St}}\label{prop-St-pf}

By~\cite[Sect.\,3]{Be2} the Steinberg group $\St(C_2,\ZZ)$ has a presentation 
with the set of generators $\{ x_{\gamma} : \gamma \in \Phi\}$
subject to the following relations:
if $\gamma, \delta \in \Phi$ such that $\gamma + \delta \neq 0$, then
\begin{equation}\label{rel-x}
\left[ x_{\gamma}, x_{\delta} \right] = \prod\, x_{i\gamma+j\delta}^{c_{i,j}^{\gamma, \delta}}\, ,
\end{equation}
where $i$ and $j$ are positive integers such that $i\gamma+j\delta$ belongs to~$\Phi$ 
and the exponents $c_{i,j}^{\gamma, \delta}$ 
are integers depending only on the structure of~$\Sp_4(\ZZ)$.
In order to find the structure constants~$c_{i,j}^{\gamma, \delta}$ it is enough to 
apply the homomorphism~$\pi$ of~\eqref{def-pi} and to compute
the integers~$c_{i,j}^{\gamma, \delta}$ appearing in the relations
\begin{equation*}
\left[ X_{\gamma}, X_{\delta} \right] = \prod\, X_{i\gamma+j\delta}^{c_{i,j}^{\gamma, \delta}} \, ,
\end{equation*}
where $X_{\gamma} = \pi(x_{\gamma})$ are the elements of~$\Sp_4(\ZZ)$ defined by~\eqref{def-Xa}--\eqref{def-X2ab}.
 
In particular, when $X_{\gamma} X_{\delta} = X_{\delta} X_{\gamma}$ ($\gamma, \delta \in \Phi$), then 
$x_{\gamma} x_{\delta} = x_{\delta} x_{\gamma}$.
Relations~\eqref{rel-x1}--\eqref{rel-x3} follow from this remark.
We next have the following equalities in~$\Sp_4(\ZZ)$:
\begin{equation*}
[X_{\alpha}, X_{\beta}] = X_{\alpha + \beta} \,  X_{2\alpha + \beta} \,  , 
\quad
[X_{\alpha}, X_{\alpha + \beta}] = X_{2\alpha + \beta}^{2} \, , 
\end{equation*}
\begin{equation*}
[X_{\alpha}, X_{-(\alpha + \beta)}] = X_{-\beta}^{-2} \, , 
\quad
[X_{\alpha}, X_{-(2\alpha + \beta)}] = X_{- \beta}\, X_{-(\alpha + \beta)}^{-1} \, ,
\end{equation*}
\begin{equation*}
[X_{\beta}, X_{-(\alpha + \beta)}] = X_{-\alpha} \, X_{-(2\alpha + \beta)} \, ,
\quad
[X_{\alpha + \beta}, X_{-\alpha}] = X_{\beta}^{-2} , 
\end{equation*}
\begin{equation*}
[X_{\alpha + \beta}, X_{-\beta}] = X_{\alpha} \, X_{2\alpha + \beta}^{-1} \,  ,
\quad
[X_{\alpha + \beta}, X_{-(2\alpha + \beta)}] = X_{-\alpha} X_{\beta}^{-1} \, ,
\end{equation*}
\begin{equation*}
[X_{2\alpha + \beta}, X_{-\alpha}] = X_{\alpha + \beta}^{-1} \, X_{ \beta}^{-1} \,  ,
\quad[X_{2\alpha + \beta}, X_{-(\alpha + \beta)}] = X_{\alpha} X_{-\beta} \,  ,
\end{equation*}
\begin{equation*}
[X_{-\alpha}, X_{-\beta}] = X_{-(\alpha + \beta)}^{-1} \,  X_{-(2\alpha + \beta)} \,   ,
\quad
[X_{-\alpha}, X_{-(\alpha + \beta)}] = X_{-(2\alpha + \beta)}^{-2} \, . 
\end{equation*}
From these relations the remaining relations of Proposition~\ref{prop-St} follow.

\subsection{The elements $w_{\gamma}$}\label{ssec-Weyl}

For any root $\gamma \in \Phi$, set 
\begin{equation}\label{def-w}
w_{\gamma} = x_{\gamma} \, x_{-\gamma}^{-1} \, x_{\gamma} \, .
\end{equation}
In Steinberg's notation (see~\cite{St0, St}), 
we have $x_{\gamma} = x_{\gamma}(1)$ and $w_{\gamma} = w_{\gamma}(1)$.

In the sequel we need to know that the kernel of~$\pi: \St(C_2,\ZZ) \to \Sp_4(\ZZ)$Ê
is generated by~$w_{\gamma}^4$ for any long root~$\gamma$,
for instance by~$w_{\beta}^4$ or by~$w_{2\alpha + \beta}^4$;
this follows from~\cite[Kor.\,3.2]{Be2}.
The fact that such a generator of the kernel is of infinite order was established in~\cite[Th.\,6.3]{Ma}.

We now list a few properties of the elements~$w_{\gamma}$.

\begin{lemma}\label{lem-ww}
We have $w_{\gamma} = w_{-\gamma}^{-1}$ for all $\gamma \in \Phi$. 
\end{lemma}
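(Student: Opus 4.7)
The plan is to reduce the assertion to a braid-like identity and then verify it using the Chevalley relations of Proposition~\ref{prop-St}. Substituting $-\gamma$ for~$\gamma$ in the definition~\eqref{def-w} gives $w_{-\gamma} = x_{-\gamma}\,x_\gamma^{-1}\,x_{-\gamma}$, and hence
\[
w_{-\gamma}^{-1} = x_{-\gamma}^{-1}\,x_\gamma\,x_{-\gamma}^{-1}.
\]
The claim $w_\gamma = w_{-\gamma}^{-1}$ is thus equivalent to the braid-type identity
\[
x_\gamma\, x_{-\gamma}^{-1}\, x_\gamma = x_{-\gamma}^{-1}\, x_\gamma\, x_{-\gamma}^{-1},
\]
which is symmetric under $\gamma \leftrightarrow -\gamma$ (it merely swaps the two sides). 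It therefore suffices to verify it for each of the four positive roots.

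None of the relations~\eqref{rel-x1}--\eqref{rel-x15} directly involves $[x_\gamma, x_{-\gamma}]$, since the Chevalley formulas are stated only when $\gamma + \delta \ne 0$. The link between $x_\gamma$ and~$x_{-\gamma}$ must therefore be extracted indirectly. Relations~\eqref{rel-x10}, \eqref{rel-x11}, and~\eqref{rel-x14} are particularly useful: they express $x_\alpha$, $x_{-\alpha}$, and~$x_{-(\alpha+\beta)}$ as the product of a commutator of two other generators and a long-root generator, and analogous expressions can be derived for the remaining long-root generators by rearranging \eqref{rel-x4}, \eqref{rel-x8}, \eqref{rel-x12}, and~\eqref{rel-x13}. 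The plan is, for each positive root~$\gamma$, to substitute such an expression on one side of the braid-like identity, then apply the commuting relations~\eqref{rel-x1}--\eqref{rel-x3} together with the remaining Chevalley formulas to reduce both sides to a common word.

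The main difficulty I anticipate is bookkeeping rather than conceptual depth. Each substitution introduces extra letters that must be pushed past one another in a carefully chosen order, and the signs appearing in~\eqref{rel-x4}--\eqref{rel-x15} must line up exactly: a sign error would leave the difference of the two sides inside the infinite cyclic kernel of~$\pi$ (generated by~$w_\beta^4$) rather than at the identity, and nothing else in the excerpt would detect the mistake. A cleaner alternative would be to first establish the conjugation rule $w_\gamma\, x_\gamma\, w_\gamma^{-1} = x_{-\gamma}^{-1}$, from which $w_\gamma\,w_{-\gamma} = w_\gamma^{-1}\,w_\gamma = 1$ follows by conjugating $w_{-\gamma} = x_{-\gamma}\,x_\gamma^{-1}\,x_{-\gamma}$ by~$w_\gamma$; but this conjugation rule is itself equivalent to the braid-like identity, so the combinatorial work cannot be avoided — only relocated.
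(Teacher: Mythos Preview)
Your proposal is a plan, not a proof: you identify the target identity
$x_\gamma\,x_{-\gamma}^{-1}\,x_\gamma = x_{-\gamma}^{-1}\,x_\gamma\,x_{-\gamma}^{-1}$
and sketch a strategy (substitute one of the generators via~\eqref{rel-x10}, \eqref{rel-x11}, \eqref{rel-x14}, then push letters around), but you carry out none of the four case-by-case verifications. Since the relations~\eqref{rel-x1}--\eqref{rel-x15} contain no direct link between $x_\gamma$ and~$x_{-\gamma}$, the derivation you describe is genuinely nontrivial; without at least one worked case there is no way to judge whether your substitutions close up, and your own remarks about sign bookkeeping and the infinite cyclic kernel acknowledge exactly this risk. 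As written, the proposal has a gap: the actual argument is missing.

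The paper avoids this computation entirely. It invokes the standard Steinberg-group identity $w_\gamma(1) = w_{-\gamma}(-1)$ (quoted from~\cite{St0} and~\cite[Lemme~5.2\,(g)]{Ma}), from which
\[
w_\gamma = w_{-\gamma}(-1) = x_{-\gamma}^{-1}\,x_\gamma\,x_{-\gamma}^{-1}
= \bigl(x_{-\gamma}\,x_\gamma^{-1}\,x_{-\gamma}\bigr)^{-1} = w_{-\gamma}^{-1}
\]
follows in one line. That identity is itself proved (in the cited references) uniformly for all roots from the general Chevalley commutator formulas, so the paper is outsourcing precisely the combinatorics you propose to do by hand. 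Your route is not wrong in principle --- the 24 relations do present $\St(C_2,\ZZ)$, so every true identity is derivable --- but to make it a proof you must either execute the four computations explicitly or, better, cite the same classical identity the paper does.
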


\begin{proof}
By \cite{St0} (see also \cite[Lemme\,5.2\,(g)]{Ma}) we have
\begin{eqnarray*}
w_{\gamma} & = & w_{\gamma}(1) = w_{-\gamma}(-1) \\
& = & x_{-\gamma}(-1) x_{\gamma}(1) x_{-\gamma}(-1) 
= x_{-\gamma}^{-1} x_{\gamma} x_{-\gamma}^{-1} \\
&  = & \left( x_{-\gamma} x_{\gamma}^{-1} x_{-\gamma} \right)^{-1} 
= w_{-\gamma}^{-1} \, ,
\end{eqnarray*}
which was to be proved.
\end{proof}

\begin{lemma}\label{lem-wxw}
We have 
\begin{equation*}
w_{\beta} \, x_{\alpha} \, w_{\beta}^{-1} =  x_{\alpha + \beta}^{-1} \, ,\qquad
w_{\beta} \, x_{-\alpha} \, w_{\beta}^{-1} =  x_{-(\alpha + \beta)}^{-1} \, ,
\end{equation*}
\begin{equation*}
w_{\beta} \, x_{\alpha + \beta} \, w_{\beta}^{-1} = x_{\alpha} \, ,\qquad
w_{\beta} \, x_{-(\alpha + \beta)} \, w_{\beta}^{-1} = x_{-\alpha} \, ,
\end{equation*}
\begin{equation*}
w_{\beta} \, x_{2\alpha + \beta} \, w_{\beta}^{-1} = x_{2\alpha + \beta} \, ,\qquad
w_{\beta} \, x_{-(2\alpha + \beta)} \, w_{\beta}^{-1} = x_{-(2\alpha + \beta)} \, ,
\end{equation*}
\begin{equation*}
w _{2\alpha + \beta} \, x_{\beta} \, w _{2\alpha + \beta}^{-1} = x_{\beta} \, ,\qquad
w _{2\alpha + \beta} \, x_{-\beta} \, w _{2\alpha + \beta}^{-1} = x_{-\beta} \, , 
\end{equation*}
\begin{equation*}
w _{2\alpha + \beta} \, x_{\alpha} \, w _{2\alpha + \beta}^{-1} = x_{-(\alpha + \beta)}^{-1} \, ,\qquad
w _{2\alpha + \beta} \, x_{-(\alpha + \beta)} \, w _{2\alpha + \beta}^{-1} = x_{\alpha} \, , 
\end{equation*}
\begin{equation*}
w _{2\alpha + \beta} \, x_{\alpha + \beta} \, w _{2\alpha + \beta}^{-1} = x_{-\alpha} \, , \qquad
w _{2\alpha + \beta} \, x_{-\alpha} \, w _{2\alpha + \beta}^{-1} = x_{\alpha + \beta}^{-1} \, ,
\end{equation*}
\begin{equation*}
w _{\alpha} \, x_{\beta} \, w _{\alpha}^{-1} = x_{2\alpha + \beta} \, , \qquad
w _{\alpha} \, x_{-\beta} \, w _{\alpha}^{-1} = x_{-(2\alpha + \beta)} \, ,
\end{equation*}
\begin{equation*}
w _{\alpha} \, x_{\alpha + \beta} \, w _{\alpha}^{-1} = x_{\alpha + \beta}^{-1} \, , \qquad
w _{\alpha} \, x_{-(\alpha + \beta)} \, w _{\alpha}^{-1} = x_{-(\alpha + \beta)}^{-1} \, ,
\end{equation*}
\begin{equation*}
w _{\alpha} \, x_{2\alpha + \beta} \, w _{\alpha}^{-1} = x_{\beta} \, , \qquad
w _{\alpha} \, x_{-(2\alpha + \beta)} \, w _{\alpha}^{-1} = x_{-\beta} \, ,
\end{equation*}
\begin{equation*}
w _{\alpha + \beta} \, x_{\alpha} \, w _{\alpha + \beta}^{-1} = x_{\alpha}^{-1} \, ,\qquad
w _{\alpha + \beta} \, x_{-\alpha} \, w _{\alpha + \beta}^{-1} = x_{-\alpha}^{-1} \, , 
\end{equation*}
\begin{equation*}
w _{\alpha + \beta} \, x_{\beta} \, w _{\alpha + \beta}^{-1} = x_{-(2\alpha+ \beta)}^{-1} \, ,\qquad
w _{\alpha + \beta} \, x_{-(2\alpha+ \beta)} \, w _{\alpha + \beta}^{-1} = x_{\beta}^{-1} \, , 
\end{equation*}
\begin{equation*}
w _{\alpha + \beta} \, x_{2\alpha+ \beta} \, w _{\alpha + \beta}^{-1} = x_{-\beta}^{-1} \, ,\qquad
w _{\alpha + \beta} \, x_{-\beta} \, w _{\alpha + \beta}^{-1} =x_{2\alpha+ \beta}^{-1} \, , 
\end{equation*}
\end{lemma}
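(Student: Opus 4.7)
The plan is to verify each of the 24 identities by explicit computation in $\St(C_2,\ZZ)$, starting from the definition $w_\gamma = x_\gamma \, x_{-\gamma}^{-1} \, x_\gamma$ and its consequence $w_\gamma^{-1} = x_\gamma^{-1} \, x_{-\gamma} \, x_\gamma^{-1}$ (which follows directly from Lemma~\ref{lem-ww}), together with the commutator relations of Proposition~\ref{prop-St}. Conceptually, each formula is a particular instance of Steinberg's general conjugation identity (see~\cite{St0,St}) asserting that $w_\gamma \, x_\delta \, w_\gamma^{-1} = x_{s_\gamma(\delta)}^{\,\eta_{\gamma,\delta}}$ for some $\eta_{\gamma,\delta}\in\{\pm 1\}$, where $s_\gamma$ is the orthogonal reflection in the hyperplane $\gamma^\perp$. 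What the lemma actually asserts is the list of 24 signs, so the work lies in determining these.

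For each pair $(\gamma,\delta)$ I would expand
\begin{equation*}
w_\gamma \, x_\delta \, w_\gamma^{-1}
= x_\gamma \, x_{-\gamma}^{-1} \, \bigl(x_\gamma \, x_\delta \, x_\gamma^{-1}\bigr) \, x_{-\gamma} \, x_\gamma^{-1}
\end{equation*}
and then move $x_\delta$ successively to the right through each of the three flanking factors, invoking at each step the relevant commutator relation from \eqref{rel-x4}--\eqref{rel-x15} to rewrite $x_\gamma x_\delta x_\gamma^{-1}$ (and subsequent conjugates) as a product of root-element generators. Relations~\eqref{rel-x1}--\eqref{rel-x3} are then used repeatedly to collect commuting factors. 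I would dispose first of the easy cases in which both $x_{\gamma}$ and $x_{-\gamma}$ commute with $x_\delta$ --- such as $w_\beta \, x_{2\alpha+\beta} \, w_\beta^{-1} = x_{2\alpha+\beta}$, which is immediate from~\eqref{rel-x1} and~\eqref{rel-x2} --- and then organise the remaining calculations by the root $\gamma$, beginning with the two long-root cases $\gamma=\beta$ and $\gamma=2\alpha+\beta$ (whose reflections permute the short roots in a simple way) before bootstrapping to the short-root cases $\gamma=\alpha$ and $\gamma=\alpha+\beta$ via Weyl-group-type conjugacy between the various $w_\gamma$.

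The main obstacle will be controlling the auxiliary generators produced when $[x_\gamma,x_\delta]$ or $[x_{-\gamma},x_\delta]$ is a product of two distinct generators, as in~\eqref{rel-x4}, \eqref{rel-x7}, \eqref{rel-x8} and~\eqref{rel-x10}--\eqref{rel-x14}. Each such step introduces a spurious generator that must itself be commuted through subsequent factors, and verifying that all these auxiliary terms cancel cleanly (leaving a single $x_{s_\gamma(\delta)}^{\pm 1}$ and no stray squared or cross terms) requires careful bookkeeping. I expect the most delicate identities to be those of the form $w_\gamma \, x_\delta \, w_\gamma^{-1} = x_{s_\gamma(\delta)}^{-1}$ where $\gamma$ is short and $\delta$ has non-trivial commutator with both $x_\gamma$ and $x_{-\gamma}$, for instance $w_\alpha \, x_{\alpha+\beta} \, w_\alpha^{-1} = x_{\alpha+\beta}^{-1}$ and $w_{\alpha+\beta} \, x_\beta \, w_{\alpha+\beta}^{-1} = x_{-(2\alpha+\beta)}^{-1}$, since there the correct sign emerges only after two rounds of expansion using relations with two-term right-hand sides.
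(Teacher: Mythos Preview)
Your plan is sound and would succeed, but it takes a genuinely different route from the paper. The paper's proof is a two-liner: it first invokes Steinberg's Relation~(R7) from~\cite{St} to know \emph{a priori} that $w_\gamma\,x_\delta\,w_\gamma^{-1}=x_{\delta'}^{\varepsilon}$ with $\delta'=s_\gamma(\delta)$ and $\varepsilon\in\{\pm1\}$, and then determines each $\varepsilon$ not by calculation in $\St(C_2,\ZZ)$ but by pushing through the surjection $\pi:\St(C_2,\ZZ)\to\Sp_4(\ZZ)$ of~\eqref{def-pi} and computing the $4\times4$ matrix $\pi(w_\gamma)\,X_\delta\,\pi(w_\gamma)^{-1}$. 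Since $\pi$ is a homomorphism and the matrices $X_{\delta'}$ and $X_{\delta'}^{-1}$ are visibly distinct, this pins down the sign immediately. Your approach instead stays entirely inside the Steinberg group and chases the commutator relations~\eqref{rel-x1}--\eqref{rel-x15}; this is self-contained (it does not rely on the external reference~\cite{St} or on the matrix model) but, as you correctly anticipate, the bookkeeping for the short-root cases with two-term commutators is substantial---easily a page or two per identity---whereas the paper's argument reduces each case to a single matrix product. Incidentally, $w_\gamma^{-1}=x_\gamma^{-1}x_{-\gamma}x_\gamma^{-1}$ follows just by inverting the defining word; you do not need Lemma~\ref{lem-ww} for that (though the alternative expression $w_\gamma=x_{-\gamma}^{-1}x_\gamma x_{-\gamma}^{-1}$, which \emph{does} come from Lemma~\ref{lem-ww}, can be useful in the direct calculations you have in mind).
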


\begin{proof}
By Relation\,(R7) in~\cite[Chap.\,3, p.\,23]{St}, for any two roots $\gamma$ and $\delta$ such that
$\gamma + \delta \neq 0$ we have
$w_{\gamma} \, x_{\delta} \, w_{\gamma}^{-1} = x_{\delta'}^{\varepsilon}$,
where $\delta'$ is the image of~$\delta$ under the reflection in the line orthogonal to~$\gamma$ and
$\varepsilon = \pm 1$. To determine the root~$\delta'$ and the sign~$\varepsilon$ 
it is enough to  compute the image 
$\pi(w_{\gamma} \, x_{\delta} \, w_{\gamma}^{-1})$ in~$\Sp_4(\ZZ)$.
\end{proof}

\section{From the braid group to the Steinberg group}\label{sec-B6Sp4}

We now construct a homomorphism from the braid group~$B_6$ to the Steinberg group~$\St(C_2,\ZZ)$.

\begin{prop}\label{prop-BtoSt}
There exists a unique homomorphism 
$f: B_6 \to \St(C_2,\ZZ)$ such that 
\begin{equation*}\label{defmapf12}
f(\sigma_1) = x_{2\alpha + \beta} \, , \qquad 
f(\sigma_2) = x_{-(2\alpha + \beta)}^{-1} \, ,
\end{equation*}
\begin{equation*}\label{defmapf3}
f(\sigma_3) = x_{\beta} \, x_{\alpha + \beta}^{-1} \, x_{2\alpha + \beta} \, ,
\end{equation*}
\begin{equation*}\label{defmapf45}
f(\sigma_4) = x_{-\beta}^{-1} \, , \qquad 
f(\sigma_5) = x_{\beta} \, .
\end{equation*}
The homomorphism $f$ is surjective.
\end{prop}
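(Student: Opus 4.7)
The plan is to invoke the universal property of $B_6$: since $B_6$ is presented by $\sigma_1,\dots,\sigma_5$ subject to the relations~\eqref{braid1} and~\eqref{braid2}, defining $f$ amounts to checking that the proposed images satisfy each of the ten defining relations. Uniqueness of $f$ is then automatic, and surjectivity will follow from a short inspection.

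I would first dispose of the six commutation relations $f(\sigma_i)f(\sigma_j)=f(\sigma_j)f(\sigma_i)$ for $|i-j|>1$. Each such relation reduces to a handful of pairwise commutations among the elements $x_\beta$, $x_{-\beta}$, $x_{\alpha+\beta}$, $x_{2\alpha+\beta}$, $x_{-(2\alpha+\beta)}$, all of which are consequences of relations~\eqref{rel-x1}, \eqref{rel-x2}, and~\eqref{rel-x3}. For example $[f(\sigma_3),f(\sigma_5)]=1$ reduces to $[x_\beta,x_{\alpha+\beta}]=[x_\beta,x_{2\alpha+\beta}]=1$, both recorded in~\eqref{rel-x1}. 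The two braid relations at the boundary are also immediate once one recognises each side as a Weyl element $w_\gamma$ from~\eqref{def-w}: indeed $f(\sigma_1)f(\sigma_2)f(\sigma_1)=x_{2\alpha+\beta}x_{-(2\alpha+\beta)}^{-1}x_{2\alpha+\beta}=w_{2\alpha+\beta}$ while $f(\sigma_2)f(\sigma_1)f(\sigma_2)=w_{-(2\alpha+\beta)}^{-1}$, and the two agree by Lemma~\ref{lem-ww}; an identical argument handles $\sigma_4\sigma_5\sigma_4=\sigma_5\sigma_4\sigma_5$, both sides equalling $w_\beta$.

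The main obstacle is the pair of braid relations involving $\sigma_3$, namely $\sigma_2\sigma_3\sigma_2=\sigma_3\sigma_2\sigma_3$ and $\sigma_3\sigma_4\sigma_3=\sigma_4\sigma_3\sigma_4$, since $f(\sigma_3)=x_\beta x_{\alpha+\beta}^{-1}x_{2\alpha+\beta}$ is a non-trivial product of three Steinberg generators. I would tackle these by expanding each side and repeatedly invoking relations~\eqref{rel-x1}--\eqref{rel-x15} to migrate the letters $x_{-\beta}^{\pm1}$ and $x_{-(2\alpha+\beta)}^{\pm1}$ across $x_\beta$, $x_{\alpha+\beta}$, and $x_{2\alpha+\beta}$, at the cost of commutator corrections governed by~\eqref{rel-x7}, \eqref{rel-x10}, \eqref{rel-x11}, and~\eqref{rel-x13}. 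The composition $\pi\circ f$ must coincide with the homomorphism $\overline f:B_6\to\Sp_4(\ZZ)$ constructed in~\cite{Ka}, so the two sides of each braid relation are guaranteed to agree modulo $\Ker\pi$; this, together with the conjugation formulae of Lemma~\ref{lem-wxw}, provides a useful organising principle for the reduction, which then only has to confirm that the residual factor in $\Ker\pi$ is trivial.

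Finally, for surjectivity, the elements $f(\sigma_1)$, $f(\sigma_2)^{-1}$, $f(\sigma_4)^{-1}$ and $f(\sigma_5)$ place $x_{2\alpha+\beta}$, $x_{-(2\alpha+\beta)}$, $x_{-\beta}$ and $x_\beta$ in the image of $f$, while the defining formula for $f(\sigma_3)$ yields $x_{\alpha+\beta}=x_{2\alpha+\beta}\,f(\sigma_3)^{-1}\,x_\beta$ in the image as well. Thus the five generators $x_\beta$, $x_{\alpha+\beta}$, $x_{2\alpha+\beta}$, $x_{-\beta}$, $x_{-(2\alpha+\beta)}$ singled out in the remark following Proposition~\ref{prop-St} all lie in the image of $f$, and since those five suffice to generate $\St(C_2,\ZZ)$, the map $f$ is surjective.
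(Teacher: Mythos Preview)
Your outline is correct and follows essentially the same route as the paper: verify the six commutation relations via~\eqref{rel-x1}--\eqref{rel-x3}, reduce the two ``boundary'' braid relations to Lemma~\ref{lem-ww} by recognising the Weyl elements $w_{2\alpha+\beta}$ and $w_\beta$, handle the two braid relations involving $\sigma_3$ by explicit reduction using the commutator relations together with Lemma~\ref{lem-wxw}, and deduce surjectivity from the five-generator remark after Proposition~\ref{prop-St}.

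One comment: your appeal to $\Ker\pi$ is logically sound (applying $\pi$ to the proposed images does give $\overline f(\sigma_i)$, so the two sides of each braid relation agree modulo $\Ker\pi$), but it is not a genuine shortcut. Knowing that the residual factor lies in the infinite cyclic group generated by $w_{2\alpha+\beta}^4$ does not by itself tell you it is trivial; you still have to carry out the same explicit reduction. The paper simply does this directly: for $\sigma_2\sigma_3\sigma_2=\sigma_3\sigma_2\sigma_3$ both sides are reduced (using \eqref{rel-x1}--\eqref{rel-x3}, \eqref{rel-x11}, and Lemma~\ref{lem-wxw}) to $x_\beta^2 x_{\alpha+\beta}^{-1} x_{-\alpha}^{-1}$ times $w_{-(2\alpha+\beta)}^{-1}$ and $w_{2\alpha+\beta}$ respectively, whence Lemma~\ref{lem-ww} finishes; for $\sigma_3\sigma_4\sigma_3=\sigma_4\sigma_3\sigma_4$ the reduction (via \eqref{rel-x5}, \eqref{rel-x10}, and Lemma~\ref{lem-wxw}) brings both sides to $x_{\alpha+\beta}^{-1}x_\alpha^{-1}x_{2\alpha+\beta}^2 w_\beta$. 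So your sketch is accurate, but the $\Ker\pi$ remark can be dropped without loss.
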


The homomorphism~$f$ lifts the homomorphism $\overline{f} : B_6 \to \Sp_4(\ZZ)$ 
constructed in~\cite[Sect.\,4.2]{Ka} in the sense that $\overline{f} = \pi \circ f$, where
$\pi$ is the natural surjection $\St(C_2,\ZZ) \to \Sp_4(\ZZ)$ defined in\,\eqref{def-pi}.

\begin{proof}
For the existence and the uniqueness of~$f$
it suffices to check that the five elements $f(\sigma_i)$ ($1 \leq i \leq 5$) 
of the Steinberg group~$\St(C_2,\ZZ)$ satisfy the braid relations~\eqref{braid1} and~\eqref{braid2}.

(i) Let us first check the commutation relations~\eqref{braid1}.

\begin{itemize}
\item
\emph{Commutation of~$f(\sigma_1)$ with $f(\sigma_3)$, $f(\sigma_4)$ and $f(\sigma_5)$.}
This follows from the fact that 
$x_{2\alpha + \beta}$ commutes with $x_{\beta}$ and $x_{\alpha + \beta}$ by\,\eqref{rel-x1}, and
with $x_{-\beta}$ by\,\eqref{rel-x2}.

\item 
\emph{Commutation of~$f(\sigma_2)$ with $f(\sigma_4)$ and $f(\sigma_5)$.}
It follows from\,\eqref{rel-x2} and\,\eqref{rel-x3}.

\item
\emph{Commutation of~$f(\sigma_3)$ with $f(\sigma_5)$.}
It follows from\,\eqref{rel-x1}.
\end{itemize}

(ii) The relation $f(\sigma_1) f(\sigma_2) f(\sigma_1) = f(\sigma_2) f(\sigma_1) f(\sigma_2)$ reads as
\begin{equation*}
x_{2\alpha + \beta} \, x_{-(2\alpha + \beta)}^{-1} \, x_{2\alpha + \beta}  
= x_{-(2\alpha + \beta)}^{-1} \, x_{2\alpha + \beta} \, x_{-(2\alpha + \beta)}^{-1}\, ,
\end{equation*}
which is equivalent to
$w_{2\alpha + \beta}  = w_{-(2\alpha + \beta)}^{-1}$. The latter holds by Lemma~\ref{lem-ww}.

(iii) The relation $f(\sigma_2) f(\sigma_3) f(\sigma_2) = f(\sigma_3) f(\sigma_2) f(\sigma_3)$ reads as
\begin{equation*}\label{rel-232}
x_{-(2\alpha + \beta)}^{-1} \, x_{\beta} \, x_{\alpha + \beta}^{-1} \, x_{2\alpha + \beta} \, x_{-(2\alpha + \beta)}^{-1} 
= x_{\beta} \, x_{\alpha + \beta}^{-1} \, x_{2\alpha + \beta} \, x_{-(2\alpha + \beta)}^{-1} \, 
x_{\beta} \, x_{\alpha + \beta}^{-1} \, x_{2\alpha + \beta} \, .
\end{equation*}
Let $\LHS$ (resp.\ $\RHS$) be the element of~$\St(C_2,\ZZ)$ represented by the left-hand (resp.\ right-hand) side of the 
previous equation.
By \eqref{rel-x1}, \eqref{rel-x2}, \eqref{rel-x3}, \eqref{rel-x11}, \eqref{def-w} we obtain
\begin{eqnarray*}
\LHS 
& = & x_{\beta} \, \underbrace{x_{-(2\alpha + \beta)}^{-1} \,  x_{\alpha + \beta}^{-1}} \, 
x_{-(2\alpha + \beta)} \, w_{-(2\alpha + \beta)}^{-1} \\
& = & x_{\beta} \,   x_{\alpha + \beta}^{-1} \, \underbrace{x_{-(2\alpha + \beta)}^{-1} \, x_{-\alpha}^{-1} \, x_{\beta} \, 
x_{-(2\alpha + \beta)}} \, w_{-(2\alpha + \beta)}^{-1} \\
& = & \underbrace{x_{\beta} \,   x_{\alpha + \beta}^{-1} \,  x_{-\alpha}^{-1} \, x_{\beta}} \, w_{-(2\alpha + \beta)}^{-1} \\
& = & x_{\beta}^2 \,   x_{\alpha + \beta}^{-1} \,  x_{-\alpha}^{-1} \, w_{-(2\alpha + \beta)}^{-1} \, .
\end{eqnarray*}
(The above underbraces ${\underbrace{}}$ mark the places to which we apply the relations we refer to.)

Let us now deal with~$\RHS$.
Since by\,\eqref{rel-x1} and\,\eqref{rel-x2} $x_{\beta}$ commutes with $x_{\alpha + \beta}$ and with $x_{\pm(2\alpha + \beta)}$,
and $x_{\alpha + \beta}$ commutes with $x_{2\alpha + \beta}$, we obtain
\begin{equation*}
\RHS = x_{\beta}^2 \, x_{\alpha + \beta}^{-1} \, w_{2\alpha + \beta}  \, x_{\alpha + \beta}^{-1} \, .
\end{equation*}
Now by Lemma~\ref{lem-wxw}, we have $w_{2\alpha + \beta}  \, x_{\alpha + \beta}^{-1} = x_{-\alpha}^{-1} \, w_{2\alpha + \beta}$.
Therefore,
\begin{equation*}
\RHS = x_{\beta}^2 \, x_{\alpha + \beta}^{-1} \, x_{-\alpha}^{-1} \, w_{2\alpha + \beta} \, .
\end{equation*}
It follows that $\LHS = \RHS$ is equivalent to $w_{-(2\alpha + \beta)}^{-1} = w_{2\alpha + \beta}$, 
which again holds by Lemma~\ref{lem-ww}.

(iv) The relation $f(\sigma_3) f(\sigma_4) f(\sigma_3) = f(\sigma_4) f(\sigma_3) f(\sigma_4)$ reads as
\begin{equation*}
x_{\beta} \, x_{\alpha + \beta}^{-1} \, x_{2\alpha + \beta} \, x_{-\beta}^{-1} \, x_{\beta} \, x_{\alpha + \beta}^{-1} \, x_{2\alpha + \beta}
=  x_{-\beta}^{-1} \, x_{\beta} \, x_{\alpha + \beta}^{-1} \, x_{2\alpha + \beta} \, x_{-\beta}^{-1} \, .
\end{equation*}
Let $\LHS'$ (resp.\ $\RHS'$) be the element of~$\St(C_2,\ZZ)$ represented by the left-hand (resp.\ right-hand) side of the 
previous equation.
Since by \eqref{rel-x1} and \eqref{rel-x2}
$x_{2\alpha + \beta}$ commutes with $x_{\alpha}$, with $x_{\alpha + \beta}$ and with~$x_{\pm\beta}$, hence with~$w_{\beta}$, 
and $x_{\beta}$ commutes with $x_{\alpha + \beta}$, we obtain
\begin{equation*}\label{eq-LHS}
\LHS' 
=   x_{\alpha + \beta}^{-1} \, x_{2\alpha + \beta}^2\, w_{\beta} \, x_{\alpha + \beta}^{-1}
= x_{\alpha + \beta}^{-1} \, x_{2\alpha + \beta}^2\, x_{\alpha}^{-1} \,  w_{\beta}
= x_{\alpha + \beta}^{-1} \, x_{\alpha}^{-1} \,  x_{2\alpha + \beta}^2  \,  w_{\beta} \, ,
\end{equation*}
the second equality holding by Lemma~\ref{lem-wxw}.
For $\RHS'$, by Lemma~\ref{lem-ww} we have
\begin{equation*}
\RHS'
= w_{\beta} \, x_{-\beta}\, x_{\alpha + \beta}^{-1} \, x_{-\beta}^{-1} \, x_{2\alpha + \beta} \, .
\end{equation*}
Now by\,\eqref{rel-x10}, we have
$x_{-\beta}\, x_{\alpha + \beta}^{-1} \, x_{-\beta}^{-1} = x_{\alpha + \beta}^{-1} \, x_{\alpha} \, x_{2\alpha + \beta}^{-1}$.
Therefore,
\begin{equation*}
\RHS'
= w_{\beta} \, x_{\alpha + \beta}^{-1} \, x_{\alpha} \,  .
\end{equation*}
It follows from Lemma~\ref{lem-wxw} that $w_{\beta} \, x_{\alpha + \beta}^{-1} =  x_{\alpha}^{-1} \, w_{\beta}$
and $w_{\beta} \, x_{\alpha} =  x_{\alpha + \beta}^{-1} \, w_{\beta}$.
Moreover by\,\eqref{rel-x5} we have 
$x_{\alpha}^{-1} \, x_{\alpha + \beta}^{-1} = x_{\alpha + \beta}^{-1} \, x_{\alpha}^{-1}  \, x_{2\alpha + \beta}^2$.
Therefore,
\begin{equation*}\label{eq-RHS}
\RHS'
= x_{\alpha}^{-1} \, w_{\beta} \, x_{\alpha} 
= x_{\alpha}^{-1} \, x_{\alpha + \beta}^{-1} \, w_{\beta}
= x_{\alpha + \beta}^{-1} \, x_{\alpha}^{-1}  \, x_{2\alpha + \beta}^2 \, w_{\beta} = {\LHS}'.
\end{equation*}

(v) The relation $f(\sigma_4) f(\sigma_5) f(\sigma_4) = f(\sigma_4) f(\sigma_5) f(\sigma_4)$ reads as
\begin{equation*}
x_{-\beta}^{-1} \,  x_{\beta} \, x_{-\beta}^{-1} = x_{\beta} \, x_{-\beta}^{-1} \,  x_{\beta} \, ,
\end{equation*}
which is equivalent to $w_{-\beta}^{-1} = w_{\beta}$. The last equality holds by Lemma~\ref{lem-ww}.

As we noted after stating Proposition~\ref{prop-St}, the five elements
$x_{\beta}$, $x_{\alpha + \beta}$, $x_{2\alpha+ \beta}$, $x_{-\beta}$, $x_{-(2\alpha+ \beta)}$
generate~$\St(C_2,\ZZ)$. 
They clearly are in the image of~$f$, which implies the surjectivity of the latter.
\end{proof}

\section{Results}\label{sec-Sp4B6}

Let $f: B_6 \to \St(C_2,\ZZ)$ be the surjective homomorphism defined in Proposition~\ref{prop-BtoSt}.
We now state our main result. 

\begin{theorem}\label{thm-main}
The kernel of the homomorphism $f: B_6 \to \St(C_2,\ZZ)$ is the normal subgroup of~$B_6$ generated by 
\begin{equation*}
(\sigma_1 \sigma_2 \sigma_1)^2 (\sigma_1 \sigma_3^{-1} \sigma_5) (\sigma_1 \sigma_2 \sigma_1)^{-2} 
(\sigma_1 \sigma_3^{-1} \sigma_5) .
\end{equation*}
\end{theorem}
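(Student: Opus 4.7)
The plan follows the two-step scheme announced in Section~\ref{sec-intro}. Write $r$ for the element
\[
r = (\sigma_1 \sigma_2 \sigma_1)^2 (\sigma_1 \sigma_3^{-1} \sigma_5) (\sigma_1 \sigma_2 \sigma_1)^{-2} (\sigma_1 \sigma_3^{-1} \sigma_5)
\]
and let $N$ be the normal subgroup of $B_6$ it generates. The containment $N \subseteq \Ker f$ is a short explicit calculation. Using the braid relation $\sigma_1 \sigma_2 \sigma_1 = \sigma_2 \sigma_1 \sigma_2$ together with Proposition~\ref{prop-BtoSt}, one gets
\[
f(\sigma_1 \sigma_2 \sigma_1) = x_{2\alpha+\beta} \, x_{-(2\alpha+\beta)}^{-1} \, x_{2\alpha+\beta} = w_{2\alpha+\beta},
\]
while a direct cancellation yields $f(\sigma_1 \sigma_3^{-1} \sigma_5) = x_{\alpha+\beta}$. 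Therefore
\[
f(r) = w_{2\alpha+\beta}^{2} \, x_{\alpha+\beta} \, w_{2\alpha+\beta}^{-2} \, x_{\alpha+\beta},
\]
and two applications of Lemma~\ref{lem-wxw} ($w_{2\alpha+\beta} x_{\alpha+\beta} w_{2\alpha+\beta}^{-1} = x_{-\alpha}$, then $w_{2\alpha+\beta} x_{-\alpha} w_{2\alpha+\beta}^{-1} = x_{\alpha+\beta}^{-1}$) collapse this to $x_{\alpha+\beta}^{-1} \cdot x_{\alpha+\beta} = 1$. Hence $f$ descends to a surjection $\bar f : B_6/N \to \St(C_2,\ZZ)$.

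For the reverse inclusion, I would build a group homomorphism $\varphi : \St(C_2,\ZZ) \to B_6/N$ such that $\varphi \circ f$ equals the canonical projection $p : B_6 \to B_6/N$; this immediately forces $\Ker f \subseteq N$ and hence the theorem. The remark after Proposition~\ref{prop-St} shows that $\St(C_2,\ZZ)$ is generated by $x_\beta$, $x_{\alpha+\beta}$, $x_{2\alpha+\beta}$, $x_{-\beta}$, $x_{-(2\alpha+\beta)}$, the other three generators being recovered from \eqref{rel-x10}, \eqref{rel-x11}, \eqref{rel-x14}. The requirement $\varphi \circ f = p$ together with the formulas of Proposition~\ref{prop-BtoSt} forces
\[
\varphi(x_{2\alpha+\beta}) = \sigma_1 N, \quad \varphi(x_{-(2\alpha+\beta)}) = \sigma_2^{-1} N, \quad \varphi(x_\beta) = \sigma_5 N,
\]
\[
\varphi(x_{-\beta}) = \sigma_4^{-1} N, \quad \varphi(x_{\alpha+\beta}) = \sigma_1 \sigma_3^{-1} \sigma_5 N,
\]
with the remaining three images defined via the three aforementioned Steinberg relations. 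A quick check on the generators $\sigma_1, \dots, \sigma_5$ then gives $\varphi \circ f = p$.

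The main obstacle is to verify that these assignments do satisfy all 24 defining relations \eqref{rel-x1}--\eqref{rel-x15} of $\St(C_2,\ZZ)$ inside $B_6/N$, so that $\varphi$ is well defined. Many of \eqref{rel-x1}--\eqref{rel-x3} follow from the far-commutation relations~\eqref{braid1} alone; several more reduce to short computations combining \eqref{braid1} and \eqref{braid2}. The genuinely delicate relations are those mixing positive and negative roots, where the added relation \eqref{rel-b6st} must enter: indeed \eqref{rel-b6st} is nothing but the identity $w_{2\alpha+\beta}^{2} \, x_{\alpha+\beta} \, w_{2\alpha+\beta}^{-2} = x_{\alpha+\beta}^{-1}$ of Lemma~\ref{lem-wxw} transcribed into the braid quotient. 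Accordingly, the most efficient organization is to first lift the ``$w_\gamma \, x_\delta \, w_\gamma^{-1}$'' formulas of Lemma~\ref{lem-wxw} to $B_6/N$ by braid manipulations plus \eqref{rel-b6st}, and then reduce each remaining Steinberg relation to one such conjugation identity. This bookkeeping, rather than any conceptual difficulty, is where the bulk of the work lies.
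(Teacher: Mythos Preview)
Your proposal is correct and follows essentially the same two-step scheme as the paper: compute $f(r)=1$ (this is Proposition~\ref{prop-vanishing}, with the identical calculation), then construct $\varphi:\St(C_2,\ZZ)\to B_6/N$ with $\varphi\circ f=\id$ and verify the 24 Steinberg relations (Proposition~\ref{prop-phi}). Your forced values of $\varphi$ on the five ``basic'' generators agree with the paper's, and defining the remaining three via \eqref{rel-x10}, \eqref{rel-x11}, \eqref{rel-x14} is consistent with the paper's explicit formulas, since those three relations already hold in~$B_6$ (items (j), (k), (h)/(g) of the proof of Proposition~\ref{prop-phi}).

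The one organizational difference worth noting is in the bookkeeping step you flag at the end. You propose to lift the Weyl conjugation formulas of Lemma~\ref{lem-wxw} to $B_6/N$ and reduce each Steinberg relation to one of them. The paper instead isolates four specific braid identities (Lemma~\ref{lem-B6}) whose right-hand sides are visibly conjugates of the defining relator, and then exploits the half-twist~$\Delta$: conjugation by $\Delta$ permutes the elements $\varphi(x_\gamma)$ according to the reflection in the line $\RR(\alpha+\beta)$ (see \eqref{sym1}--\eqref{sym4}), which pairs off roughly half of the relations with the other half. Your scheme is perfectly viable and perhaps more conceptual; the paper's $\Delta$-symmetry is a concrete shortcut that halves the number of direct checks and is worth knowing about.
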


This means that the Steinberg group~$\St(C_2,\ZZ)$ has a presentation with five generators 
$\sigma_1$, $\sigma_2$, $\sigma_3$, $\sigma_4$, $\sigma_5$, and eleven relations 
consisting of the ten braid relations~\eqref{braid1}--\eqref{braid2} and the additional relation
\begin{equation}\label{eq-relSt}
(\sigma_1 \sigma_2 \sigma_1)^2 (\sigma_1 \sigma_3^{-1} \sigma_5) (\sigma_1 \sigma_2 \sigma_1)^{-2} 
= (\sigma_1 \sigma_3^{-1} \sigma_5)^{-1} .
\end{equation}
This relation is clearly equivalent to~\eqref{rel-b6st}.

As mentioned at the beginning of Section~\ref{ssec-Weyl},
the kernel of the projection $\pi: \St(C_2,\ZZ) \to \Sp_4(\ZZ)$ is generated by~$w_{2\alpha+\beta}^4$.
Since 
\begin{equation}\label{eq-w121}
w_{2\alpha+\beta} = x_{2\alpha+\beta} x_{-(2\alpha+\beta)}^{-1} x_{2\alpha+\beta}
= f(\sigma_1 \sigma_2 \sigma_1) ,
\end{equation}
we deduce the following presentation\footnote{Group presentations of~$\Sp_4(\ZZ)$ of a different kind
have been given in~\cite{Be1, Bn}.} of~$\Sp_4(\ZZ)$.

\begin{coro}\label{coro-Sp}
The symplectic modular group~$\Sp_4(\ZZ)$ has a presentation with five generators 
$\sigma_1$, $\sigma_2$, $\sigma_3$, $\sigma_4$, $\sigma_5$, and twelve relations 
consisting of Relations~\eqref{braid1}--\eqref{braid2} and the two relations
\begin{equation*}
(\sigma_1 \sigma_2 \sigma_1)^2 (\sigma_1 \sigma_3^{-1} \sigma_5) (\sigma_1 \sigma_2 \sigma_1)^{-2} 
= (\sigma_1 \sigma_3^{-1} \sigma_5)^{-1}  
\end{equation*}
and
\begin{equation*}
(\sigma_1 \sigma_2 \sigma_1)^4 =  1 .
\end{equation*}
\end{coro}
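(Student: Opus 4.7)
The plan is to combine Theorem~\ref{thm-main} with the description of the kernel of $\pi: \St(C_2,\ZZ) \to \Sp_4(\ZZ)$ recalled at the beginning of Section~\ref{ssec-Weyl}. By that theorem, the composite $B_6 \xrightarrow{f} \St(C_2,\ZZ) \xrightarrow{\pi} \Sp_4(\ZZ)$ is surjective, so $\Sp_4(\ZZ)$ is isomorphic to $B_6/\Ker(\pi \circ f)$, and it suffices to identify this kernel as the normal closure in~$B_6$ of the braid word in~\eqref{rel-b6st} together with $(\sigma_1 \sigma_2 \sigma_1)^4$.

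First, I would record that, by Theorem~\ref{thm-main}, $\Ker(f)$ is exactly the normal closure of~\eqref{rel-b6st}, so passing to the quotient gives an isomorphism $B_6/\Ker(f) \cong \St(C_2,\ZZ)$ under which $\pi \circ f$ becomes~$\pi$. Hence $\Ker(\pi \circ f)/\Ker(f) \cong \Ker(\pi)$, and $\Ker(\pi \circ f)$ is the preimage under~$f$ of $\Ker(\pi)$.

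Next, I would invoke the fact, quoted from~\cite[Kor.\,3.2]{Be2}, that $\Ker(\pi)$ is (infinite) cyclic and generated by~$w_{2\alpha+\beta}^4$. Since $\Ker(\pi)$ is normal in~$\St(C_2,\ZZ)$ and cyclic, it coincides with the normal subgroup generated by~$w_{2\alpha+\beta}^4$. Using equality~\eqref{eq-w121}, which asserts that $w_{2\alpha+\beta} = f(\sigma_1 \sigma_2 \sigma_1)$, I get $w_{2\alpha+\beta}^4 = f\bigl((\sigma_1 \sigma_2 \sigma_1)^4\bigr)$. It follows that $\Ker(\pi \circ f)$ is generated, as a normal subgroup of~$B_6$, by the braid word~\eqref{rel-b6st} together with $(\sigma_1 \sigma_2 \sigma_1)^4$. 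Combining this with the standard presentation of~$B_6$ by the braid relations~\eqref{braid1}--\eqref{braid2} yields exactly the presentation stated in Corollary~\ref{coro-Sp}.

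There is essentially no obstacle beyond bookkeeping: the only subtlety is to point out that, because $\Ker(\pi)$ is a normal cyclic subgroup of~$\St(C_2,\ZZ)$, being generated by~$w_{2\alpha+\beta}^4$ as a subgroup is the same as being generated by~$w_{2\alpha+\beta}^4$ as a normal subgroup, so that adjoining the single relation $(\sigma_1 \sigma_2 \sigma_1)^4 = 1$ to the presentation of Theorem~\ref{thm-main} indeed kills the whole kernel of~$\pi$.
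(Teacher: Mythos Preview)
Your proof is correct and follows essentially the same route as the paper: the paper's argument is the short paragraph preceding the corollary, which uses Theorem~\ref{thm-main} together with the fact (from~\cite{Be2}) that $\Ker(\pi)$ is generated by $w_{2\alpha+\beta}^4 = f((\sigma_1\sigma_2\sigma_1)^4)$ via~\eqref{eq-w121}. Your write-up simply makes the bookkeeping explicit, including the useful observation that a cyclic normal subgroup coincides with the normal closure of any generator.
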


Let $N$ be the normal subgroup of~$B_6$ generated by 
\begin{equation*}
\beta = (\sigma_1 \sigma_2 \sigma_1)^2 (\sigma_1 \sigma_3^{-1} \sigma_5) (\sigma_1 \sigma_2 \sigma_1)^{-2} 
(\sigma_1 \sigma_3^{-1} \sigma_5) .
\end{equation*}
Theorem~\ref{thm-main} is a consequence of the following two propositions.

\begin{prop}\label{prop-vanishing}
We have $f(N) = 1$.
\end{prop}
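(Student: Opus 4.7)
Since $N$ is the normal subgroup of $B_6$ generated by $\beta$, it suffices to verify that $f(\beta)=1$ in $\St(C_2,\ZZ)$. The plan is to compute $f(\beta)$ directly, splitting the word $\beta$ into the two natural pieces $(\sigma_1\sigma_2\sigma_1)^2$ and $\sigma_1\sigma_3^{-1}\sigma_5$, each of which has a simple image under~$f$.

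First I would simplify $f(\sigma_1\sigma_3^{-1}\sigma_5)$. Inserting the formulas from Proposition~\ref{prop-BtoSt} and using that $x_\beta$ cancels, we get $f(\sigma_3^{-1}\sigma_5) = (x_\beta\, x_{\alpha+\beta}^{-1}\, x_{2\alpha+\beta})^{-1}\, x_\beta = x_{2\alpha+\beta}^{-1}\, x_{\alpha+\beta}$, and then $f(\sigma_1\sigma_3^{-1}\sigma_5) = x_{2\alpha+\beta}\, x_{2\alpha+\beta}^{-1}\, x_{\alpha+\beta} = x_{\alpha+\beta}$. On the other hand, by~\eqref{eq-w121} we already know that $f((\sigma_1\sigma_2\sigma_1)^2) = w_{2\alpha+\beta}^2$. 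Hence
\begin{equation*}
f(\beta) \,=\, w_{2\alpha+\beta}^2 \cdot x_{\alpha+\beta} \cdot w_{2\alpha+\beta}^{-2} \cdot x_{\alpha+\beta}.
\end{equation*}

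The remaining step is to compute the conjugate $w_{2\alpha+\beta}^2 \, x_{\alpha+\beta}\, w_{2\alpha+\beta}^{-2}$. I would apply Lemma~\ref{lem-wxw} twice: the identity $w_{2\alpha+\beta}\, x_{\alpha+\beta}\, w_{2\alpha+\beta}^{-1} = x_{-\alpha}$ gives the first conjugation, and then $w_{2\alpha+\beta}\, x_{-\alpha}\, w_{2\alpha+\beta}^{-1} = x_{\alpha+\beta}^{-1}$ gives the second. Combined, $w_{2\alpha+\beta}^2\, x_{\alpha+\beta}\, w_{2\alpha+\beta}^{-2} = x_{\alpha+\beta}^{-1}$, and so $f(\beta) = x_{\alpha+\beta}^{-1}\, x_{\alpha+\beta} = 1$, as required.

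There is no serious obstacle here: the argument is mechanical once one observes that $f(\sigma_1\sigma_3^{-1}\sigma_5)$ collapses to the single generator $x_{\alpha+\beta}$, so that the relation to verify becomes exactly a statement about how the Weyl-type element $w_{2\alpha+\beta}^2$ acts on that generator via the formulas of Lemma~\ref{lem-wxw}. Geometrically, conjugation by $w_{2\alpha+\beta}$ realizes the reflection in the line orthogonal to the long root $2\alpha+\beta$, which exchanges the roots $\alpha+\beta$ and $-\alpha$; iterating this reflection returns $x_{\alpha+\beta}$ to the opposite root-space element $x_{\alpha+\beta}^{-1}$, which is precisely why the single relation~\eqref{rel-b6st} suffices.
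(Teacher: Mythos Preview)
Your proof is correct and follows essentially the same route as the paper's own argument: reduce to showing $f(\beta)=1$, compute $f(\sigma_1\sigma_3^{-1}\sigma_5)=x_{\alpha+\beta}$ and $f(\sigma_1\sigma_2\sigma_1)=w_{2\alpha+\beta}$, and then apply Lemma~\ref{lem-wxw} twice to conclude that $w_{2\alpha+\beta}^2\,x_{\alpha+\beta}\,w_{2\alpha+\beta}^{-2}=x_{\alpha+\beta}^{-1}$. The added geometric remark about the reflection in the line orthogonal to $2\alpha+\beta$ is a pleasant gloss, but the computation itself is identical to the paper's.
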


\begin{proof}
It suffices to check that $f(\beta) = 1$. By~\eqref{eq-w121} we have
$f(\sigma_1 \sigma_2 \sigma_1) = w_{2\alpha+\beta}$.
We also have
\begin{equation*}
f(\sigma_1 \sigma_3^{-1} \sigma_5) 
= x_{2\alpha+\beta} (x_{2\alpha+\beta}^{-1} x_{\alpha+\beta} x_{\beta}^{-1}) x_{\beta} = x_{\alpha+\beta}\, .
\end{equation*}
Therefore, using Lemma~\ref{lem-wxw}, we obtain
\begin{equation*}
f(\beta) 
= w_{2\alpha+\beta}^2 x_{\alpha+\beta} w_{2\alpha+\beta}^{-2} x_{\alpha+\beta}
= w_{2\alpha+\beta} x_{-\alpha} w_{2\alpha+\beta}^{-1} x_{\alpha+\beta}
= x_{\alpha+\beta}^{-1} x_{\alpha+\beta} = 1 .
\end{equation*}
\end{proof}

It follows from the previous proposition
that $f: B_6 \to \St(C_2,\ZZ)$ factors through a homomorphism $B_6/N \to \St(C_2,\ZZ)$,
which we still denote by~$f$.

\begin{prop}\label{prop-phi}
There exists a homomorphism $\varphi: \St(C_2,\ZZ) \to B_6/N$ such that 
\begin{eqnarray*}
\varphi(x_{\alpha}) & = & (\sigma_5 \sigma_4) (\sigma_1 \sigma^{-1}_3 \sigma_5) (\sigma_5 \sigma_4)^{-1}  ,  \\
\varphi(x_{\beta}) & = & \sigma_5 \, ,  \\
\varphi(x_{\alpha + \beta}) & = & \sigma_1 \sigma_3^{-1} \sigma_5  \, ,  \\
\varphi(x_{2\alpha + \beta}) & = & \sigma_1  \, ,  \\
\varphi(x_{-\alpha}) & = &  (\sigma_1 \sigma_2) (\sigma_1 \sigma^{-1}_3 \sigma_5) (\sigma_1 \sigma_2)^{-1}   , \\
\varphi(x_{-\beta}) & = & \sigma^{-1}_4  ,   \\
\varphi(x_{-(\alpha + \beta)}) 
& = & (\sigma_1 \sigma_2 \sigma_ 5 \sigma_4) (\sigma^{-1}_1 \sigma_3 \sigma_5^{-1})
(\sigma_1 \sigma_2 \sigma_ 5 \sigma_4)^{-1}  ,  \\
\varphi(x_{-(2\alpha + \beta)}) & = & \sigma^{-1}_2  
\end{eqnarray*}
modulo~$N$.
Moreover, $\varphi \circ f = \id$.
\end{prop}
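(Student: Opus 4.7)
The plan has two stages. First, I define a homomorphism $\widetilde{\varphi}$ from the free group~$F$ on the eight symbols $\{x_\gamma\}_{\gamma \in \Phi}$ to~$B_6/N$ by the formulas in the statement. To descend $\widetilde{\varphi}$ to the desired homomorphism $\varphi: \St(C_2,\ZZ) \to B_6/N$, I then verify that all 24 defining relations \eqref{rel-x1}--\eqref{rel-x15} of Proposition~\ref{prop-St} hold in $B_6/N$ under $\widetilde{\varphi}$. Once $\varphi$ is well-defined, the identity $\varphi \circ f = \id$ is checked on the five generators $\sigma_i$ of~$B_6/N$.

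The key organizing device for the 24 checks is to set $\widetilde{w}_\gamma := \widetilde{\varphi}(w_\gamma)$ and first lift the Weyl-conjugation identities of Lemma~\ref{lem-wxw} from $\St(C_2,\ZZ)$ to $B_6/N$. Using \eqref{braid2}, one sees immediately that $\widetilde{w}_{2\alpha+\beta} = \sigma_1\sigma_2\sigma_1$ and $\widetilde{w}_\beta = \sigma_5\sigma_4\sigma_5$. The defining formulas for $\widetilde{\varphi}(x_\alpha)$, $\widetilde{\varphi}(x_{-\alpha})$ and $\widetilde{\varphi}(x_{-(\alpha+\beta)})$ are arranged so that
\[
\widetilde{w}_\beta\,\widetilde{\varphi}(x_{\alpha+\beta})\,\widetilde{w}_\beta^{-1} = \widetilde{\varphi}(x_\alpha), \qquad \widetilde{w}_{2\alpha+\beta}\,\widetilde{\varphi}(x_{\alpha+\beta})\,\widetilde{w}_{2\alpha+\beta}^{-1} = \widetilde{\varphi}(x_{-\alpha}),
\]
and analogously for $\widetilde{\varphi}(x_{-(\alpha+\beta)})$; each reduces to a braid computation exploiting that $\sigma_1$ commutes with $\sigma_3,\sigma_5$ and that $\sigma_5$ commutes with $\sigma_1,\sigma_3$. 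The remaining Weyl-conjugation identities of Lemma~\ref{lem-wxw} (those producing an inverse, such as $w_{2\alpha+\beta}\,x_{-\alpha}\,w_{2\alpha+\beta}^{-1} = x_{\alpha+\beta}^{-1}$) are, after one application of the braid-only identities above, equivalent to the single assertion
\[
\widetilde{w}_{2\alpha+\beta}^{2}\,\widetilde{\varphi}(x_{\alpha+\beta})\,\widetilde{w}_{2\alpha+\beta}^{-2} = \widetilde{\varphi}(x_{\alpha+\beta})^{-1}
\]
in $B_6/N$, which is exactly a restatement of the relation defining~$N$.

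The main obstacle is the ensuing case-by-case verification of the 24 Steinberg relations in $B_6/N$. The commutation relations \eqref{rel-x1}--\eqref{rel-x3} among the ``basic'' generators $x_{\pm\beta}$, $x_{\pm(2\alpha+\beta)}$, $x_{\alpha+\beta}$ reduce to the braid commutations \eqref{braid1}; the commutations involving $x_\alpha$, $x_{-\alpha}$, $x_{-(\alpha+\beta)}$ follow by conjugating the previous ones with the appropriate $\widetilde{w}_\gamma$; and the Chevalley-type relations \eqref{rel-x4}--\eqref{rel-x15} are handled one at a time by expanding the commutator, using the Weyl-conjugation identities to move between basic and conjugate generators, and appealing to the added relation wherever a squared generator appears on the right-hand side (as in \eqref{rel-x5} and \eqref{rel-x15}). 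Each such verification is mechanical, but their sheer number is what makes this the longest part of the argument.

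Finally, the identity $\varphi \circ f = \id$ is checked directly on generators: $\varphi(f(\sigma_i)) = \sigma_i$ for $i \in \{1,2,4,5\}$ by inspection of the formulas for $f$ and~$\varphi$, while for $\sigma_3$,
\[
\varphi(f(\sigma_3)) = \sigma_5\,(\sigma_1\sigma_3^{-1}\sigma_5)^{-1}\,\sigma_1 = \sigma_5\sigma_5^{-1}\sigma_3\sigma_1^{-1}\sigma_1 = \sigma_3,
\]
using $[\sigma_1,\sigma_5]=1$ in~$B_6$.
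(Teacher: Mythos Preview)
Your outline follows the same overall architecture as the paper's proof: define $\varphi$ on the eight generators, verify the 24 defining relations of Proposition~\ref{prop-St} in $B_6/N$, and then check $\varphi\circ f=\id$ on the five braid generators (your computation for $\sigma_3$ is exactly the paper's). Where you diverge is in the organizing symmetry. The paper uses conjugation by the half-twist~$\Delta$, which realizes the Weyl reflection in $\alpha+\beta$ and swaps $\{\sigma_1,\sigma_2\}\leftrightarrow\{\sigma_5,\sigma_4\}$; this halves the case analysis and is how the paper deduces, e.g., \eqref{rel-x11}--\eqref{rel-x15} from earlier cases. You instead propose lifting the conjugation identities of Lemma~\ref{lem-wxw} for $\widetilde w_\beta=\sigma_5\sigma_4\sigma_5$ and $\widetilde w_{2\alpha+\beta}=\sigma_1\sigma_2\sigma_1$ directly to $B_6/N$ and using those to transport relations around. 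Both devices are legitimate; your Weyl-element approach is closer in spirit to the Steinberg-group side, while the paper's $\Delta$-approach exploits a purely braid-theoretic symmetry.

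One small correction: your claim that \emph{all} the sign-carrying Weyl identities reduce to the \emph{single} assertion $\widetilde w_{2\alpha+\beta}^{\,2}\,\widetilde\varphi(x_{\alpha+\beta})\,\widetilde w_{2\alpha+\beta}^{\,-2}=\widetilde\varphi(x_{\alpha+\beta})^{-1}$ is not quite right. The identities obtained from $w_\beta$ (e.g.\ $w_\beta x_\alpha w_\beta^{-1}=x_{\alpha+\beta}^{-1}$) reduce instead to $\widetilde w_\beta^{\,2}\,\widetilde\varphi(x_{\alpha+\beta})\,\widetilde w_\beta^{\,-2}=\widetilde\varphi(x_{\alpha+\beta})^{-1}$, i.e.\ to $(\sigma_4\sigma_5\sigma_4)^2(\sigma_1\sigma_3^{-1}\sigma_5)(\sigma_4\sigma_5\sigma_4)^{-2}(\sigma_1\sigma_3^{-1}\sigma_5)=1$. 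This is the $\Delta$-conjugate of the defining word of~$N$ and therefore does lie in~$N$ by normality, but you should say so; the paper makes this explicit via~\eqref{rel-Delta3} and uses it in items~(d) and~(e). Finally, be aware that for the genuinely nontrivial Chevalley relations \eqref{rel-x4}--\eqref{rel-x6a} and \eqref{rel-x9} the paper still needs the four ad hoc braid identities recorded in Lemma~\ref{lem-B6}; your Weyl-conjugation machinery streamlines the bookkeeping but does not by itself eliminate those computations.
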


Before we prove Proposition~\ref{prop-phi}, let us record the following four equalities in the braid group~$B_6$.
One may check them using one's favorite algorithm for solving the word problem in
braid groups (see for instance the monographs \cite{Bi, De, DDRW, E++, KT}).

\begin{lemma}\label{lem-B6}
The following equalities hold in~$B_6$:
\begin{multline*}
[\varphi(x_{\alpha}), \varphi(x_{\beta})]^{-1} \varphi(x_{\alpha + \beta}) \, \varphi(x_{2\alpha + \beta}) \\
= (\sigma_4 \sigma_5 \sigma_4)^2 (\sigma_1\sigma_3^{-1} \sigma_5) 
(\sigma_4 \sigma_5 \sigma_4)^{-2} (\sigma_1 \sigma_3^{-1} \sigma_5) ,
\end{multline*}
\begin{equation*}
[\varphi(x_{\alpha}), \varphi(x_{\alpha + \beta})]^{-1} \varphi(x_{2\alpha + \beta}^2)
= (\sigma_1 \sigma_3^{-1} \sigma_5) (\sigma_4 \sigma_5 \sigma_4)^2 (\sigma_1 \sigma_3^{-1} \sigma_5)
(\sigma_4  \sigma_5  \sigma_4)^{-2}  ,
\end{equation*}
\begin{multline*}
[\varphi(x_{\alpha}), \varphi(x_{-(\alpha + \beta)})] \, \varphi(x_{-\beta}^2)\\
= (\sigma_5 \sigma_4 \sigma_1 \sigma_2) ( \sigma_1\sigma_2 \sigma_3)\, 
\gamma^{-1} \, ( \sigma_1\sigma_2 \sigma_3)^{-1} (\sigma_5 \sigma_4 \sigma_1\sigma_2)^{-1} ,
\end{multline*}
where $\gamma = (\sigma_1 \sigma_3^{-1} \sigma_5)
(\sigma_1 \sigma_2 \sigma_1)^{-2}  ( \sigma_1 \sigma_3^{-1} \sigma_5 )  
(\sigma_1 \sigma_2 \sigma_1)^2  $, 
and
\begin{equation*}
[\varphi(x_{\alpha + \beta}), \varphi(x_{-\alpha})] \, \varphi(x_{\beta}^2) 
= (\sigma_1 \sigma_3^{-1} \sigma_5) (\sigma_1 \sigma_2 \sigma_1)^2 (\sigma_1 \sigma_3^{-1} \sigma_5) 
(\sigma_1 \sigma_2 \sigma_1)^{-2} .
\end{equation*}
\end{lemma}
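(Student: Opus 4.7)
The four identities in Lemma~\ref{lem-B6} are equalities of specific words in the braid group~$B_6$, whose word problem is decidable. The plan is therefore purely computational: substitute the definitions of each $\varphi(x_\gamma)$ into the left-hand side, expand everything into a word in $\sigma_1^{\pm 1},\dots,\sigma_5^{\pm 1}$, and reduce to the right-hand side by repeatedly applying the braid relations~\eqref{braid1}--\eqref{braid2}.

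Three easy observations keep these reductions under control. First, the far-commutation relations~\eqref{braid1} are abundant here: $\sigma_5$ commutes with $\sigma_1, \sigma_2, \sigma_3$; $\sigma_4$ commutes with $\sigma_1, \sigma_2$; and $\sigma_1$ commutes with $\sigma_3$. In particular, any word in $\{\sigma_1,\sigma_2\}$ commutes with any word in $\{\sigma_4,\sigma_5\}$, so the only nontrivial interaction inside any of the $\varphi(x_\gamma)$'s is mediated by the single letter~$\sigma_3$. Second, $\varphi(x_{\alpha+\beta}) = \sigma_1 \sigma_3^{-1} \sigma_5$ is a product of three pairwise-commuting factors, and so can be freely reordered. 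Third, the only nontrivial braid-relation moves needed are $\sigma_2\sigma_3\sigma_2 = \sigma_3\sigma_2\sigma_3$, $\sigma_3\sigma_4\sigma_3 = \sigma_4\sigma_3\sigma_4$, and $\sigma_1\sigma_2\sigma_1 = \sigma_2\sigma_1\sigma_2$.

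For each of the four identities I would proceed as follows:
(i) expand the left-hand side into a single braid word;
(ii) use far-commutation to gather the $\sigma_3^{\pm 1}$ factors and to bring letters of the same index next to each other;
(iii) apply the braid relations at the $\sigma_2\sigma_3$ and $\sigma_3\sigma_4$ junctions in order to reshuffle the letters and to expose subwords of the form $(\sigma_1\sigma_2\sigma_1)^{\pm 2}$ or $(\sigma_4\sigma_5\sigma_4)^{\pm 2}$;
(iv) compare with the right-hand side. The first, second and fourth identities are short enough to be verified by hand in about a dozen rewriting steps each, since their right-hand sides are themselves short conjugations of $\sigma_1\sigma_3^{-1}\sigma_5$ by $(\sigma_4\sigma_5\sigma_4)^{\pm 2}$ or $(\sigma_1\sigma_2\sigma_1)^{\pm 2}$.

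The main obstacle is the third identity. Its left-hand side involves the fourfold conjugate $\varphi(x_{-(\alpha+\beta)}) = (\sigma_1\sigma_2\sigma_5\sigma_4)(\sigma_1^{-1}\sigma_3\sigma_5^{-1})(\sigma_1\sigma_2\sigma_5\sigma_4)^{-1}$ together with $\varphi(x_{-\beta}^2) = \sigma_4^{-2}$, while its right-hand side contains the nested element~$\gamma$ with two occurrences of $(\sigma_1\sigma_2\sigma_1)^{\pm 2}$ wrapped inside the long conjugator $(\sigma_5\sigma_4\sigma_1\sigma_2)(\sigma_1\sigma_2\sigma_3)$. Rewriting this by hand is possible but tedious and error-prone. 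Rather than grind through it, I would verify this particular identity by computing the Garside (greedy) normal form of both sides in~$B_6$ using a computer algebra system, following the algorithmic treatment of the word problem described in the monographs cited in the statement; the two normal forms then coincide letter by letter.
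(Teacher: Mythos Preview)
Your proposal is correct and matches the paper's own treatment: the paper does not give a detailed proof of this lemma but simply states that the four equalities can be checked using one's favorite algorithm for the word problem in braid groups, citing the same monographs you allude to. Your plan is in fact more explicit than the paper's, which offers no hand-computation hints at all.
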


We make also the following observation. Let 
\begin{equation*}
\Delta = (\sigma_ 1\sigma_2 \sigma_3 \sigma_4 \sigma_5) (\sigma_1 \sigma_2 \sigma_3 \sigma_4) 
(\sigma_1 \sigma_2 \sigma_3) (\sigma_1 \sigma_2) \sigma_1
\end{equation*}
be the `half-twist' in the braid group~$B_6$. It has the following important property
(see for instance~\cite{Bi} or~\cite[Sect.\,1.3.3]{KT}):
\begin{equation}\label{rel-Delta1}
\Delta \sigma_i  \Delta^{-1} = \sigma_{6-i}  \, , \qquad (i = 1, \ldots, 5)
\end{equation}
which implies that its square $\Delta^2$ is central (actually $\Delta^2$ generates the center of
the braid group).
Note that the braids $\sigma_1 \sigma_5$, $\sigma_2 \sigma_4$ and $\sigma_3$
are invariant under conjugation by~$\Delta$ so that
\begin{equation*}\label{rel-Delta2}
\Delta (\sigma_1 \sigma_3^{-1} \sigma_5)  \Delta^{-1} = \sigma_5 \sigma_3^{-1} \sigma_1
= \sigma_1 \sigma_3^{-1} \sigma_5 \, ,
\end{equation*}
whereas $\sigma_1 \sigma_2 \sigma_1$ and $\sigma_4 \sigma_5 \sigma_4$ are interchanged:
\begin{equation}\label{rel-Delta3}
\Delta (\sigma_1 \sigma_2 \sigma_1)  \Delta^{-1} = \sigma_5 \sigma_4 \sigma_5
= \sigma_4 \sigma_5 \sigma_4 \, . 
\end{equation}

In view of \eqref{rel-Delta1} and of the definition of~$\varphi$ we have 
the following ``symmetries'':
\begin{equation}\label{sym1}
\Delta \, \varphi(x_{\beta}) \, \Delta^{-1}  = \varphi(x_{2\alpha + \beta})\, ,
\end{equation}
\begin{equation}\label{sym2}
\Delta \, \varphi(x_{-\beta}) \, \Delta^{-1} = \varphi(x_{-(2\alpha + \beta)}) \, ,
\end{equation}
\begin{equation}\label{sym3}
\Delta \, \varphi(x_{\alpha}) \, \Delta^{-1} = \varphi(x_{-\alpha})  \, ,
\end{equation}
\begin{equation}\label{sym4}
\Delta \, \varphi(x_{\pm(\alpha + \beta)}) \, \Delta^{-1} = \varphi(x_{\pm(\alpha + \beta)})  \, .
\end{equation}
Geometrically speaking, conjugating the elements $\varphi(x_{\gamma})$ ($\gamma\in \Phi$) 
by~$\Delta$ corresponds in the root system~$C_2$ to the reflection in the one-dimensional vector space 
spanned by~$\alpha + \beta$.

\begin{rem}\label{rem-Delta}
It follows from~\cite[Th.\,0.2]{BDM} that the centralizer of~$\Delta$ is an Artin group of type~$C_3$.
Actually it is generated by the above-mentioned elements $\beta_1 = \sigma_1 \sigma_5$, 
$\beta_2 = \sigma_2 \sigma_4$ and $\beta_3 = \sigma_3$, which satisfy the relations 
\begin{equation*}
\beta_1 \beta_2 \beta_1 = \beta_2 \beta_1 \beta_2 \, , \quad
\beta_1 \beta_3 = \beta_3 \beta_1 \, , \quad
\beta_2 \beta_3 \beta_2 \beta_3 = \beta_3 \beta_2 \beta_3 \beta_2 \, .
\end{equation*}
\end{rem}

\begin{proof}[Proof of Proposition~\ref{prop-phi}]
To prove the existence of $\varphi: \St(C_2,\ZZ) \to B_6/N$
it suffices to check that the elements $\varphi(x_{\gamma})$ ($\gamma \in \Phi$) satisfy the
24~relations~\eqref{rel-x1}--\eqref{rel-x15} in the quotient group~$B_6/N$.

(a) Relations~\eqref{rel-x1}: in view of the braid relations\,\eqref{braid1},
we have
\begin{equation*}
[\varphi(x_{\alpha}), \varphi(x_{2\alpha + \beta})] 
= [(\sigma_5 \sigma_4) (\sigma_1 \sigma^{-1}_3 \sigma_5) (\sigma_5 \sigma_4)^{-1}, \sigma_1] = 1\, ,
\end{equation*}
\begin{equation*}
[\varphi(x_{\beta}), \varphi(x_{\alpha + \beta})] = 
[\sigma_5, \sigma_1 \sigma_3^{-1} \sigma_5] = 1 \, ,
\end{equation*}
\begin{equation*}
[\varphi(x_{\beta}), \varphi(x_{2\alpha + \beta}) ] =  [\sigma_5, \sigma_1] = 1 \, ,
\end{equation*}
\begin{equation*}
[\varphi(x_{\alpha + \beta}), \varphi(x_{2\alpha + \beta}) ] =  [\sigma_1 \sigma_3^{-1} \sigma_5, \sigma_1] = 1 \, .
\end{equation*}

(b) Relations~\eqref{rel-x2}:
again in view of the commutation relations\,\eqref{braid1}, but also of the braid relations\,\eqref{braid2}, we have
\begin{eqnarray*}
[\varphi(x_{\alpha}), \varphi(x_{-\beta}) ] 
& = & 
[(\sigma_5 \sigma_4) (\sigma_1 \sigma^{-1}_3 \sigma_5) (\sigma_5 \sigma_4)^{-1}, \sigma^{-1}_4] \\
& =& \sigma_5 \sigma_4 (\sigma_1 \sigma^{-1}_3 \sigma_5) \underbrace{\sigma_4^{-1} \sigma_5^{-1} \sigma^{-1}_4}
 \sigma_5 \sigma_4 (\sigma_1 \sigma^{-1}_3 \sigma_5)^{-1}\underbrace{ \sigma_4^{-1} \sigma_5^{-1} \sigma_4} \\
 & =& \sigma_5 \sigma_4 (\sigma_1 \sigma^{-1}_3 \sigma_5) \underbrace{\sigma_5^{-1} 
 \underbrace{\sigma_4^{-1} \sigma^{-1}_5 \sigma_5 \sigma_4} 
 (\sigma_1 \sigma^{-1}_3 \sigma_5)^{-1} \sigma_5} \sigma_4^{-1} \sigma_5^{-1} \\
& = & \sigma_5 \sigma_4 (\sigma_1 \sigma^{-1}_3 \sigma_5)(\sigma_1 \sigma^{-1}_3 \sigma_5)^{-1}
\sigma_4^{-1} \sigma_5^{-1} = 1.
\end{eqnarray*}
(Here and below the underbraces indicate the places where we apply the braid relations
and the trivial relations $\sigma_i \sigma_i^{-1} = \sigma_i^{-1} \sigma_i = 1$.)

Similarly, by~\eqref{braid1} we have
\begin{equation*}
[\varphi(x_{\beta}), \varphi(x_{-\alpha})] = 
[\sigma_5,  (\sigma_1 \sigma_2) (\sigma_1 \sigma^{-1}_3 \sigma_5) (\sigma_1 \sigma_2)^{-1}] = 1\, ,
\end{equation*}
\begin{equation*}
[\varphi(x_{\beta}), \varphi(x_{-(2\alpha + \beta)}) ] = [\sigma_5, \sigma^{-1}_2] = 1 \, ,
\end{equation*}
\begin{equation*}
[\varphi(x_{-\beta}), \varphi(x_{2\alpha + \beta}) ] = [\sigma^{-1}_4, \sigma_1] = 1 \, .
\end{equation*}
Note that only $[\varphi(x_{\alpha}), \varphi(x_{-\beta}) ] = 1$ requires one of the braid relations~\eqref{braid2}.

(c) Relations~\eqref{rel-x3}: 
for the first one, using \eqref{sym2} and \eqref{sym3}, we have
\begin{eqnarray*}
[\varphi(x_{-\alpha}), \varphi(x_{-(2\alpha + \beta)})] 
& = & [\Delta \, \varphi(x_{\alpha}) \, \Delta^{-1}, \Delta \, \varphi(x_{-\beta}) \, \Delta^{-1}]  \\
& = & \Delta \, [\varphi(x_{\alpha}) , \varphi(x_{-\beta})] \, \Delta^{-1}  = 1 
\end{eqnarray*}
since $[\varphi(x_{\alpha}), \varphi(x_{-\beta}) ] = 1$, as we have just proved in Item\,(b).

For the second one we use the equality 
$\varphi(x_{-(\alpha + \beta)}) = (\sigma_1 \sigma_2) \, \varphi(x_{\alpha})^{-1} (\sigma_1 \sigma_2)^{-1}$,
which we derive from the definition of~$\varphi$.
Then 
\begin{eqnarray*}
[\varphi(x_{-\beta}), \varphi(x_{-(\alpha + \beta)}]
& = & [\varphi(x_{-\beta}), (\sigma_1 \sigma_2) \, \varphi(x_{\alpha})^{-1} (\sigma_1 \sigma_2)^{-1}] \\
& = & [\sigma_4^{-1}, (\sigma_1 \sigma_2) \, \varphi(x_{\alpha})^{-1} (\sigma_1 \sigma_2)^{-1}] \\
& = & [(\sigma_1 \sigma_2) \sigma_4^{-1} (\sigma_1 \sigma_2)^{-1}, 
(\sigma_1 \sigma_2) \, \varphi(x_{\alpha})^{-1} (\sigma_1 \sigma_2)^{-1}] \\
& = & (\sigma_1 \sigma_2)\,  [\varphi(x_{-\beta}), \varphi(x_{\alpha})^{-1}] \, (\sigma_1 \sigma_2)^{-1}.
\end{eqnarray*}
Since $\varphi(x_{-\beta})$ commutes with $\varphi(x_{\alpha})$ by Item\,(b), it commutes with its inverse
$\varphi(x_{\alpha})^{-1}$. 
Hence, $[\varphi(x_{-\beta}), \varphi(x_{-(\alpha + \beta)}] = (\sigma_1 \sigma_2) (\sigma_1 \sigma_2)^{-1} = 1$.

For the third one,  we have
$[\varphi(x_{-\beta}), \varphi(x_{-(2\alpha + \beta)})] = [\sigma^{-1}_4, \sigma^{-1}_2] = 1$ by~\eqref{braid1}.

For the fourth one, conjugating by~$\Delta$, and using\,\eqref{sym2} and\,\eqref{sym4},
we reduce the desired equality 
$[\varphi(x_{-(\alpha + \beta)}), \varphi(x_{-(2\alpha + \beta)})]  = 1$
to the previous equality $[\varphi(x_{-\beta}), \varphi(x_{-(\alpha + \beta)}] = 1$.

(d) Relation~\eqref{rel-x4}: using the first equality in Lemma~\ref{lem-B6} and Equation~\eqref{rel-Delta3},
we have 
\begin{eqnarray*}
&& \hskip -36pt  [\varphi(x_{\alpha}), \varphi(x_{\beta})]^{-1} 
\varphi(x_{\alpha + \beta}) \, \varphi(x_{2\alpha + \beta}) \\
& = & (\sigma_4 \sigma_5 \sigma_4)^2 (\sigma_1\sigma_3^{-1} \sigma_5) 
(\sigma_4 \sigma_5 \sigma_4)^{-2} (\sigma_1 \sigma_3^{-1} \sigma_5) \\
& = & \Delta \, (\sigma_1 \sigma_2 \sigma_1)^2 (\sigma_1\sigma_3^{-1} \sigma_5) 
(\sigma_1 \sigma_2 \sigma_1)^{-2} (\sigma_1 \sigma_3^{-1} \sigma_5) \, \Delta^{-1}.
\end{eqnarray*}
It follows that $[\varphi(x_{\alpha}), \varphi(x_{\beta})]^{-1} 
\varphi(x_{\alpha + \beta}) \, \varphi(x_{2\alpha + \beta})$ belongs to the normal subgroup~$N$,
hence is trivial in~$B_6/N$.

(e) Relation~\eqref{rel-x5}: we use the second equality in Lemma~\ref{lem-B6} and argue as
in Item~(d). It follows that
$[\varphi(x_{\alpha}), \varphi(x_{\alpha + \beta})]^{-1} \varphi(x_{2\alpha + \beta}^2)$
is trivial in~$B_6/N$.

(f) Relation~\eqref{rel-x6a}: consider the third equality in Lemma~\ref{lem-B6}; the braid~$\gamma$
is trivial in~$B_6/N$; hence, so is
$[\varphi(x_{\alpha}), \varphi(x_{-(\alpha + \beta)})] \, \varphi(x_{-\beta}^2)$.

(g) Relation~\eqref{rel-x7}: we have
\begin{eqnarray*}
&& \hskip -28pt [\varphi(x_{\alpha}), \varphi(x_{-(2\alpha + \beta)})]^{-1} \varphi(x_{-(\alpha + \beta)})^{-1}  \varphi(x_{-\beta})\\
& = & \underbrace{\sigma_2^{-1} \sigma_ 5 \sigma_4 \sigma_ 1 \sigma_3^{-1} \sigma_5} 
\underbrace{\sigma_4^{-1} \sigma_5^{-1} 
\sigma_2 \sigma_ 5 \sigma_4} \sigma_ 1^{-1} \sigma_3 \sigma_5^{-1}   \\
&& \times \, \underbrace{\sigma_4^{-1} \sigma_5^{-1} \sigma_1 \sigma_2 \sigma_ 5 \sigma_4} 
\underbrace{\sigma_1 \sigma_3^{-1} \sigma_5} 
\underbrace{\sigma_4^{-1} \sigma_5^{-1} \sigma_2^{-1} \sigma_1^{-1} \sigma_ 4^{-1}} \\
& = & \sigma_ 5 \sigma_4  \sigma_5 \sigma_2^{-1} \sigma_ 1 \sigma_3^{-1}
\sigma_2  \sigma_ 1^{-1} \sigma_3 \underbrace{\sigma_5^{-1} \sigma_1 \sigma_2 \sigma_5}
\sigma_3^{-1} \underbrace{\sigma_1 \sigma_2^{-1} \sigma_1^{-1}} 
\underbrace{\sigma_4^{-1} \sigma_5^{-1}  \sigma_ 4^{-1}} \\
& = & \sigma_ 5 \sigma_4  \sigma_5 \sigma_2^{-1} \sigma_ 1 \sigma_3^{-1}
\sigma_2   \underbrace{\sigma_ 1^{-1} \sigma_3 \sigma_1} 
\underbrace{\sigma_2 \sigma_3^{-1} \sigma_2^{-1}} \sigma_1^{-1} \sigma_2
\sigma_5^{-1} \sigma_4^{-1}  \sigma_5^{-1} \\
& = & \sigma_ 5 \sigma_4  \sigma_5 \sigma_2^{-1} \sigma_ 1 \sigma_3^{-1}
\sigma_2   \sigma_3 
\sigma_3^{-1} \sigma_2^{-1} \sigma_3 \sigma_1^{-1} \sigma_2
\sigma_5^{-1} \sigma_4^{-1}  \sigma_5^{-1} \\
& = & (\sigma_ 5 \sigma_4  \sigma_5 \sigma_2^{-1} \sigma_ 1 \sigma_3^{-1} \sigma_2   \sigma_3)
(\sigma_ 5 \sigma_4  \sigma_5 \sigma_2^{-1} \sigma_ 1 \sigma_3^{-1} \sigma_2   \sigma_3)^{-1}  = 1.
\end{eqnarray*}

(h) Relation~\eqref{rel-x8}: we have
\begin{eqnarray*}
&& \hskip -38pt [\varphi(x_{\beta}), \varphi(x_{-(\alpha + \beta)})]^{-1} \varphi(x_{-\alpha}) \, \varphi(x_{-(2\alpha + \beta)}) \\
& = & 
\sigma_1 \sigma_2 \sigma_ 5 \sigma_4 \sigma^{-1}_1 \sigma_3 \sigma_5^{-1}
\sigma_4^{-1}\underbrace{\sigma_ 5^{-1} \sigma_2^{-1} \sigma_1^{-1} \sigma_5} 
\sigma_1 \sigma_2 \sigma_ 5 \sigma_4 \sigma_1 \sigma_3^{-1} \sigma_5 \\
&& \times \, \sigma_4^{-1}\sigma_ 5^{-1} \underbrace{\sigma_2^{-1} \sigma_1^{-1} \sigma_5^{-1}
\sigma_1 \sigma_2} \sigma_1 \sigma_3^{-1} \sigma_5\underbrace{\sigma_2^{-1} \sigma_1^{-1} \sigma_2^{-1}} \\
& = & \sigma_1 \sigma_2 \sigma_ 5 \sigma_4 \sigma^{-1}_1 \sigma_3 \sigma_5^{-1} \sigma_4^{-1}
\underbrace{\sigma_2^{-1} \sigma_1^{-1} \sigma_1 \sigma_2} \sigma_ 5 \sigma_4 \sigma_1 \sigma_3^{-1} \sigma_5 \\
&& \times \,  
\sigma_4^{-1}\sigma_ 5^{-1} \underbrace{\sigma_5^{-1} \sigma_1 \sigma_3^{-1} \sigma_5 \sigma_1^{-1}} 
\sigma_2^{-1} \sigma_1^{-1} \\
& = & \sigma_1 \sigma_2 \sigma_ 5 \sigma_4 \underbrace{\sigma^{-1}_1 \sigma_3 \sigma_5^{-1} 
\underbrace{\sigma_4^{-1}\sigma_ 5 \sigma_4} \sigma_1} \sigma_3^{-1} \sigma_5 
\sigma_4^{-1}\underbrace{\sigma_ 5^{-1} \sigma_3^{-1}}  \sigma_2^{-1} \sigma_1^{-1}\\
& = & \sigma_1 \sigma_2 \sigma_ 5 \sigma_4  \sigma_3 \underbrace{\sigma_5^{-1} \sigma_5}
\sigma_ 4 \underbrace{\sigma_5^{-1}  \sigma_3^{-1} \sigma_5}
\sigma_4^{-1}\sigma_3^{-1} \sigma_ 5^{-1} \sigma_2^{-1} \sigma_1^{-1}\\
& = & \sigma_1 \sigma_2 \sigma_ 5 \sigma_4  \sigma_3 \sigma_ 4   
\underbrace{\sigma_3^{-1} \sigma_4^{-1}\sigma_3^{-1}} \sigma_ 5^{-1} \sigma_2^{-1} \sigma_1^{-1}\\
& = & \sigma_1 \sigma_2 \sigma_ 5 \sigma_4  \sigma_3 \sigma_ 4   
\sigma_4^{-1} \sigma_3^{-1}\sigma_4^{-1} \sigma_ 5^{-1} \sigma_2^{-1} \sigma_1^{-1} \\
& = & (\sigma_1 \sigma_2 \sigma_ 5 \sigma_4  \sigma_3 \sigma_ 4)
(\sigma_1 \sigma_2 \sigma_ 5 \sigma_4  \sigma_3 \sigma_ 4)^{-1} 
= 1.
\end{eqnarray*}

(i) Relation~\eqref{rel-x9}: the fourth equality in Lemma~\ref{lem-B6} implies that the braid
$[\varphi(x_{\alpha + \beta}), \varphi(x_{-\alpha})] \, \varphi(x_{\beta}^2)$ belongs to~$N$,
hence is trivial in~$B_6/N$.

(j) Relation~\eqref{rel-x10}: we have
\begin{eqnarray*}
&& \hskip -48pt [\varphi(x_{\alpha + \beta}), \varphi(x_{-\beta})]^{-1} 
\varphi(x_{\alpha}) \, \varphi(x_{2\alpha + \beta}^{-1}) \\
& = & \sigma^{-1}_4 \underbrace{\sigma_1 \sigma_3^{-1} \sigma_5 \sigma_4 \sigma_1^{-1}} \sigma_3 
\underbrace{\sigma_5^{-1} \sigma_5 }
\sigma_4 \underbrace{\sigma_1 \sigma^{-1}_3 \sigma_5 \sigma_4^{-1} \sigma_5^{-1} \sigma_1^{-1}} \\
& = & \sigma^{-1}_4 \sigma_3^{-1} \sigma_5 \underbrace{\sigma_4  \sigma_3 \sigma_4 }
 \sigma^{-1}_3 \underbrace{\sigma_5 \sigma_4^{-1} \sigma_5^{-1}} \\
& = & \sigma^{-1}_4 \underbrace{\sigma_3^{-1} \sigma_5 \sigma_3} 
 \underbrace{\sigma_4 \sigma_3  \sigma^{-1}_3 \sigma_4^{-1}} \sigma_5^{-1} \sigma_4 \\
& = & \sigma^{-1}_4 \sigma_5 \sigma_5^{-1} \sigma_4 = 1.
\end{eqnarray*}

(k) Relation~\eqref{rel-x11}: using the computation in Item~(j) and conjugating by~$\Delta$,
we obtain
\begin{eqnarray*}
&& \hskip -48pt 
[\varphi(x_{\alpha + \beta}), \varphi(x_{-(2\alpha + \beta)})]^{-1}  \varphi(x_{-\alpha}) \, \varphi(x_{\beta}^{-1}) \\
& = & \Delta\, \, [\varphi(x_{\alpha + \beta}), \varphi(x_{-\beta})]^{-1}  \varphi(x_{\alpha}) \, \varphi(x_{2\alpha + \beta}^{-1}) 
\, \Delta^{-1}
= 1.
\end{eqnarray*}

(l) Relation~\eqref{rel-x12}: by \eqref{sym1}, \eqref{sym3} and \eqref{sym4} we have
\begin{multline*}
[\varphi(x_{2\alpha + \beta}), \varphi(x_{-\alpha})] \, \varphi(x_{\alpha + \beta}) \, \varphi(x_{ \beta})\\
= \Delta\, [\varphi(x_{\beta}), \varphi(x_{\alpha})] \, \varphi(x_{\alpha + \beta}) \, \varphi(x_{2\alpha + \beta})\, \Delta^{-1} ,
\end{multline*}
which in view of Item~(d) above implies that
$[\varphi(x_{2\alpha + \beta}), \varphi(x_{-\alpha})] \, \varphi(x_{\alpha + \beta}) \, \varphi(x_{ \beta})$
is trivial in~$B_6/N$.

(m) Relation~\eqref{rel-x13} reduces to Relation~\eqref{rel-x8} after conjugating by~$\Delta$.
In the same way Relations~\eqref{rel-x14} and~\eqref{rel-x15} reduce to 
Relations~\eqref{rel-x7} and~\eqref{rel-x6a} respectively. 

To complete the proof it remains to check that $\varphi \circ f  = \id$.
This is a consequence of the equalities 
$(\varphi \circ f )(\sigma_i) = \sigma_i$ ($1\leq i \leq 5$),
which follow immediately from the definitions of~$f$ and~$\varphi$.
\end{proof}

As can be seen in the previous proof, 18~out of the 24~images under $\varphi$ of the relations 
defining~$\St(C_2,\ZZ)$ hold in the braid group~$B_6$ itself. 
Only six of them hold modulo the normal subgroup~$N$.

\begin{rem}\label{rem-Sp}
In \cite[Cor.~7]{Ka} we gave a braid-like presentation of the symplectic modular group~$\Sp_4(\ZZ)$
with four additional relations rather than the two additional ones in Corollary~\ref{coro-Sp} above.
In~\cite{Ka} the first relation in Equation~(27) is equivalent (modulo the braid relations)
to the relation $(\sigma_1 \sigma_2 \sigma_ 1)^4 = 1$ above.
Relation~(28) in \emph{loc. cit.} is equivalent to the relation
$(\sigma_1 \sigma_2 \sigma_1)^2 (\sigma_1 \sigma_3^{-1} \sigma_5) (\sigma_1 \sigma_2 \sigma_1)^{-2} 
= (\sigma_1 \sigma_3^{-1} \sigma_5)^{-1}$. 
Our corollary~\ref{coro-Sp} shows that the relation $\Delta^2 = 1$ of~\cite[Eq.~(26)]{Ka}
is not needed. Actually, one can prove
\begin{equation*}
f(\Delta^2) = w_{\beta}^{12}   = w_{2\alpha+ \beta}^{12} 
= f\left((\sigma_4 \sigma_5 \sigma_4)^{12} \right)
= f\left((\sigma_1 \sigma_2 \sigma_1)^{12} \right).
\end{equation*}
\end{rem}


\appendix

\section{The Steinberg group of type~$C_2$ over a commutative ring}\label{app-St}

For completeness we give a presentation of the Steinberg group $\St(C_2,R)$ 
over an arbitrary commutative ring~$R$.

By~\cite[Sect.\,3]{StM} (see also~\cite{St0} or~\cite[Chap.\,6]{St}) the group~$\St(C_2,R)$ has a presentation 
with the set of generators $\{ x_{\gamma}(u) : \gamma \in \Phi, \, u\in R\}$
subject to the following relations:
\begin{itemize}
\item
$x_{\gamma}(u+v) = x_{\gamma}(u) x_{\gamma}(v)$ for all $u,v \in R$;

\item
if $\gamma, \delta \in \Phi$ such that $\gamma + \delta \neq 0$, then
\begin{equation*}
\left[ x_{\gamma}(u), x_{\delta}(v) \right] = \prod\, x_{i\gamma+j\delta}(c_{i,j}^{\gamma, \delta} u^i v^j),
\end{equation*}
where $i$ and $j$ are positive integers such that $i\gamma+j\delta$ belongs to~$\Phi$. 
\end{itemize}
The coefficients $c_{i,j}^{\gamma, \delta}$ are the integers which have already come up in
Relations~\eqref{rel-x} in the proof of Proposition~\ref{prop-St}.

In this way we obtain the following 24 defining relations for~$\St(C_2,R)$ 
(where $u,v \in R$):
\begin{multline}\label{rel-B1}
[x_{\alpha}(u), x_{2\alpha + \beta}(v)]  = [x_{\beta}(u), x_{\alpha + \beta}(v)] = \\
= [x_{\beta}(u), x_{2\alpha + \beta}(v)] = [x_{\alpha + \beta}(u), x_{2\alpha + \beta}(v)]  = 1, 
\end{multline}
\begin{multline}\label{rel-B2}
[x_{\alpha}(u), x_{- \beta}(v)] = [x_{\beta}(u), x_{-\alpha}(v)] = \\
= [x_{\beta}(u), x_{-(2\alpha + \beta)}(v)] = [x_{-\beta}(u), x_{2\alpha + \beta}(v)] = 1 , 
\end{multline}
\begin{multline}\label{rel-B3}
[x_{-\alpha}(u), x_{-(2\alpha + \beta)}(v)] = [x_{-\beta}(u), x_{-(\alpha + \beta)}(v)] = \\
= [x_{-\beta}(u), x_{-(2\alpha + \beta)}(v)] 
=  [x_{-(\alpha + \beta)}(u), x_{-(2\alpha + \beta)}(v)] = 1, 
\end{multline}
\begin{equation}\label{rel-B4}
[x_{\alpha}(u), x_{\beta}(v)] = x_{\alpha + \beta}(uv) \,  x_{2\alpha + \beta}(u^2v)  , 
\end{equation}
\begin{equation}\label{rel-B5}
[x_{\alpha}(u), x_{\alpha + \beta}(v)] = x_{2\alpha + \beta}(2uv) , 
\end{equation}
\begin{equation}\label{rel-B6}
[x_{\alpha}(u), x_{-(\alpha + \beta)}(v)] = x_{-\beta}(-2uv)  , 
\end{equation}
\begin{equation}\label{rel-B7}
[x_{\alpha}(u), x_{-(2\alpha + \beta)}(v)] = x_{- \beta}(u^2v) \, x_{-(\alpha + \beta)}(-uv) ,
\end{equation}
\begin{equation}\label{rel-B8}
[x_{\beta}(u), x_{-(\alpha + \beta)}(v)] = x_{-\alpha}(uv) \, x_{-(2\alpha + \beta)}(uv^2)  ,
\end{equation}
\begin{equation}\label{rel-B9}
[x_{\alpha + \beta}(u), x_{-\alpha}(v)] = x_{\beta}(-2uv) , 
\end{equation}
\begin{equation}\label{rel-B10}
[x_{\alpha + \beta}(u), x_{-\beta}(v)] = x_{\alpha}(uv) \, x_{2\alpha + \beta}(-u^2v)   ,
\end{equation}
\begin{equation}\label{rel-B11}
[x_{\alpha + \beta}(u), x_{-(2\alpha + \beta)}(v)] = x_{-\alpha}(uv) x_{\beta}(-u^2v)  ,
\end{equation}
\begin{equation}\label{rel-B12}
[x_{2\alpha + \beta}(u), x_{-\alpha}(v)] = x_{\alpha + \beta}(-uv) \, x_{ \beta}(-uv^2)   ,
\end{equation}
\begin{equation}\label{rel-B13}
[x_{2\alpha + \beta}(u), x_{-(\alpha + \beta)}(v)] = x_{\alpha}(uv) x_{-\beta}(uv^2)   ,
\end{equation}
\begin{equation}\label{rel-B14}
[x_{-\alpha}(u), x_{-\beta}(v)] = x_{-(\alpha + \beta)}(-uv) \,  x_{-(2\alpha + \beta)}(u^2v)    ,
\end{equation}
\begin{equation}\label{rel-B15}
[x_{-\alpha}(u), x_{-(\alpha + \beta)}(v)] = x_{-(2\alpha + \beta)}(-2uv)  . 
\end{equation}

\section*{Acknowledgement}
I am indebted to Fran\c cois Digne for Remark~\ref{rem-Delta}.


\end{document}